\newcommand{\R}{\mathbb{R}}
\newcommand{\N}{\mathbb{N}}
\newcommand{\ep}{\varepsilon}
\newcommand{\pa}{\partial}
\newtheorem{theorem}{Theorem}[section]
\newtheorem{lemma}[theorem]{Lemma}
\newtheorem{corollary}[theorem]{Corollary}
\theoremstyle{remark}
\newtheorem{remark}{Remark}[section]
\theoremstyle{definition}
\newtheorem{definition}{Definition}[section]
\numberwithin{equation}{section}
\def\@cite#1#2{[{{\bfseries #1}\if@tempswa , #2\fi}]}
\newcounter{hours}\newcounter{minutes}
\renewcommand*{\thehours}{\two@digits\c@hours}
\renewcommand*{\theminutes}{\two@digits\c@minutes}
\begin{document}
\begin{center}
\Large{{\bf 
On the decay of mass with respect to 
\\
an invariant measure for semilinear 
\\
heat equations in exterior domains 
\\
}}
\end{center}

\vspace{5pt}

\begin{center}
Ahmad Fino%
\footnote{
College of Engineering and Technology, American University of the Middle East, Egaila 54200, Kuwait, 
E-mail:\ {\tt ahmad.fino@aum.edu.kw}}
and
Motohiro Sobajima%
\footnote{
Department of Mathematics, 
Faculty of Science and Technology, Tokyo University of Science,  
2641 Yamazaki, Noda-shi, Chiba, 278-8510, Japan,  
E-mail:\ {\tt msobajima1984@gmail.com}}

\end{center}

\newenvironment{summary}{\vspace{.5\baselineskip}\begin{list}{}{%
     \setlength{\baselineskip}{0.85\baselineskip}
     \setlength{\topsep}{0pt}
     \setlength{\leftmargin}{12mm}
     \setlength{\rightmargin}{12mm}
     \setlength{\listparindent}{0mm}
     \setlength{\itemindent}{\listparindent}
     \setlength{\parsep}{0pt}
     \item\relax}}{\end{list}\vspace{.5\baselineskip}}
\begin{summary}
{\footnotesize {\bf Abstract.}
The paper concerns with the decay property of 
solutions to the initial-boundary value problem of the semilinear heat equation $\pa_tu-\Delta u+u^p=0$ in exterior domains $\Omega$ in $\R^N$ ($N\geq 2$). 
The problem for the one-dimensional case is formulated with 
$\Omega=(0,\infty)$ which is one of the representative of the connected components in $\R$.
One can see that the $C_0$-semigroup for the corresponding linear problem
possesses an invariant measure $\phi(x)\,dx$, where $\phi$ is a positive harmonic function satisfying the Dirichlet boundary condition. 
This paper clarifies 
that the mass of solutions with respect to the measure $\phi(x)\,dx$ 
vanishes as $t\to \infty$ if and only if $1<p\leq \min\{2,1+\frac{2}{N}\}$. 
In the other case $p>\min\{2,1+\frac{2}{N}\}$, 
we prove that all solutions are asymptotically free. The asymptotic profile 
is actually given by a modification with Gaussian when $N\geq 3$. 
}
\end{summary}

{\footnotesize{\it Mathematics Subject Classification}\/ (2020): %
Primary:%
	35K20  	
Secondary:%
	35K58,  	
	35B40  	
}

{\footnotesize{\it Key words and phrases}\/: 
semilinear heat equations, exterior domains, 
asymptotic behavior. 
}


\section{Introduction}
This paper concerns the initial-boundary value problem of 
the semilinear heat equation in an exterior domain
\begin{equation}\label{intro:eq1}
\begin{cases}
\pa_tu(t,x)-\Delta u(t,x)=-\big( u(t,x) \big)^p 
&\text{in}\ (0,\infty)\times \Omega, 
\\
u(t,x)=0
&\text{on}\ (0,\infty)\times\pa \Omega, 
\\
u(0,x)=u_0(x)\geq0 
&\text{in}\ \Omega, 
\end{cases}
\end{equation} 
where the unknown function $u:[0,\infty)\times \Omega\to \R$ is nonnegative, 
$N\geq 1$, $p>1$, and $\Omega\subset\R^N$ 
be a connected exterior domain whose obstacle 
$\mathcal{O}=\Omega^c\subset\R^N$ 
is bounded with smooth compact boundary 
$\partial\Omega$. 
The initial value $u_0$ belongs to 
a class of bounded and integrable functions, 
which is specified later. 
Without loss of generality, we assume that $0$ is located in the interior of $\mathcal{O}$ and so we put
$$R_0:=\sup\{|x|;\,\,x\in \mathcal{O}\}>0,\qquad r_0=\inf\{|x|;\,\,x\in \Omega\}>0.$$
This gives $\overline{B(0,r_0)}\subset\mathcal{O}\subset\overline{B(0,R_0)}$, where $B(0,r)=\{x\in\mathbb{R}^N;\,\,|x|<r\}$. Note that, for the sake of simplicity, 
for the one-dimensional case 
we may consider $\Omega=(0,\infty)$.
It is worth noting that the existence 
of global-in-time nonnegative (classical) solutions 
are obtained by the standard procedure via the comparison principle.

One of important properties
of the heat equation 
\begin{equation}\label{intro:heatRN}
\begin{cases}
\pa_tu(t,x)-\Delta u(t,x)=0
&\text{in}\ (0,\infty)\times \R^N, 
\\
u(0,x)=u_0(x)\geq0 
&\text{in}\ \R^N
\end{cases}
\end{equation} 
is the conservation of the mass 
of solutions defined as
\begin{align}\label{intro:mass}
M(u(t))=\int_{\R^N}u(t,x)\,dx
\end{align}
whenever the solution $u$ (and therefore, $u_0$) is integrable
in $\R^N$. 
This is easily verified by the representation formula
\begin{equation}\label{intro:Gaussian}
u(t,x)=[G(t,\cdot)*u_0](x), \quad 
G(t,x)=(4\pi t)^{-\frac{N}{2}}e^{-\frac{|x|^2}{4t}}. 
\end{equation}
In the case of exterior domains, the situation changes. 
In the recent papers by
Dom\'{i}nguez-de-Tena and Rodr\'{i}guez-Bernal \cite{DR2024,DR2025JDE},
the authors discussed 
the effect of 
several boundary conditions
for the linear heat equation 
in exterior domains. Roughly speaking, 
the mass for the initial-boundary value problem
\begin{equation}\label{intro:eq2:linear}
\begin{cases}
\pa_tu(t,x)-\Delta u(t,x)=0 
&\text{in}\ (0,\infty)\times \Omega, 
\\
u(t,x)=0
&\text{on}\ (0,\infty)\times\pa \Omega, 
\\
u(0,x)=u_0(x)\geq0 
&\text{in}\ \Omega
\end{cases}
\end{equation}
 is no longer preserved:
\[
\frac{d}{dt}\int_{\Omega}u(t,x)\,dx=
\int_{\Omega} \Delta u(t,x)\,dx
=
\int_{\pa \Omega} \nabla u(t,x)\cdot\boldsymbol{\nu}(x)\,dS(x)<0,
\]
where $\boldsymbol{\nu}(x)$ denotes the outer normal vector at $x\in\pa \Omega$. 
Instead of this, one can find the following asymptotic profile of the mass
\[
\lim_{t\to \infty}\int_{\Omega}u(t,x)\,dx
=
\int_{\Omega}u_0(x)\phi(x)\,dx, \quad \phi(x)=\lim_{t\to \infty}e^{t\Delta_{\Omega}}\mathbbm{1}_{\Omega}(x),
\]
where $\mathbbm{1}_K$ is the indicator function on $K$ 
and 
$e^{t\Delta_{\Omega}}$ is the Dirichlet heat semigroup on $\Omega$.  
Namely, $e^{t\Delta_{\Omega}}u_0$ denotes the unique solution of \eqref{intro:eq2:linear}.
It should be emphasized that the function $\phi$ can be characterized as 
a (bounded) harmonic function in $\Omega$ satisfying the Dirichlet boundary condition,
which is non-trivial if $N\geq 3$. 
Therefore in \cite{DR2024,DR2025JDE}, the quantity 
\[
M_{\Omega}(u(t))=\int_{\Omega}u(t,x)\phi(x)\,dx
\]
(which was named ``asymptotic mass'')
plays a central role to clarify the asymptotic profile of solutions to \eqref{intro:eq2:linear}.
By using $M_{\Omega}(u(\cdot))$, they proved that if $N\geq 3$, 
then the solution $u$ of the linear problem \eqref{intro:eq2:linear} 
satisfies 	 
\[
\lim_{t\to \infty}
\Big(t^\frac{N}{2}\|u(t,\cdot)-M_{\Omega}(u_0)\phi(\cdot)G(t,\cdot)\|_{L^\infty(\Omega)}\Big)=0.
\]
Note that, in \cite{DR2025JDE},
the asymptotic behavior of solutions for $N=1,2$ was not addressed
since their auxiliary function $\phi$ is identically zero. 
This phenomenon suggests that 
the asymptotic profile of solutions to exterior problem \eqref{intro:eq2:linear} 
is completely different from that of the problem in $\R^N$ when $N=1,2$. 

In the analysis of \eqref{intro:eq1}, 
the interest of the present paper 
is the effect of the semilinear term $-u^p$
for the structure of the corresponding linear 
problem \eqref{intro:eq2:linear}.
In Gmira--V\'{e}ron \cite{GmiraVeron1984}, 
the problem with $\Omega=\R^N$ is studied 
and it is observed that 
comparing two supersolutions 
$(t,x)\mapsto [e^{t\Delta}u_0](x)$ 
(linear effect)
and 
$t\mapsto (p-1)^{-\frac{1}{p-1}}t^{-\frac{1}{p-1}}$ (nonlinear effect), 
one can see that the situation 
differs whether $p$ is 
larger or smaller than the 
threshold $1+\frac{2}{N}$ for the integrable initial data. 
The situation 
in the sense of the effect from the nonlinearlity
seems quite close to 
the one for the existence/non-existence of
global-in-time non-trivial solutions to 
the problem $\pa_t u-\Delta u=u^p$ 
developed by Fujita \cite{Fujita1966} 
and subsequent papers 
(see e.g., Hayakawa \cite{Hayakawa1973}, 
Sugitani \cite{Sugitani1975}, 
Kobayshi--Sirao--Tanaka \cite{KobayashiSiraoTanaka1977}, 
and also a monograph 
by Souplet--Quitnner \cite{QS}).
In this case, 
the large time behavior of the mass 
defined by \eqref{intro:mass}
plays a crucial role for understanding the asymptotic behavior of solution $u$;
\begin{itemize}
\item if $1<p\leq 1+\frac{2}{N}$, 
then the mass vanishes at $t\to \infty$, 
that is,  
$\lim\limits_{t\to \infty}M(u(t))=0$;
\item if $p>1+\frac{2}{N}$, 
then the mass does not vanish at $t\to \infty$
and the solution $u$ be haves like a solution
of linear heat equation. Namely, 
$M_\infty=\lim\limits_{t\to \infty}M(u(t))>0$
and $\lim\limits_{t\to\infty}
|t^{\frac{N}{2}}u(t,x)-M_{\infty}G(t,x)|=0$
uniformly on the sets 
$E_c=\{x\in\R^N\;;\;|x|\leq ct^{1/2}\}$, 
where $c$ is an arbitrary positive constant.
\end{itemize}
Note that the following estimate 
was also found in Fino--Karch \cite{FinoKarch2010}: 
\[
\lim_{t\to\infty}\Big(t^{\frac{N}{2}(1-\frac{1}{q})}\|u(t,\cdot)-M_{\infty}G(t,\cdot)\|_{L^q(\R^N)}\Big)=0
\]
for every $1\leq q<\infty$. 
On the other hand, 
non-existence of global-in-time solutions 
to the two-dimensional semilinear problem 
\begin{equation}\label{intro:eq3:twodim-fujita}
\begin{cases}
\pa_tu(t,x)-\Delta u(t,x)=\big(u(t,x)\big)^2 
&\text{in}\ (0,\infty)\times \big(\R^2\setminus \overline{B(0,1)}\big), 
\\
u(t,x)=0
&\text{on}\ (0,\infty)\times\pa B(0,1), 
\\
u(0,x)=u_0(x)\geq0 
&\text{in}\ \R^2\setminus \overline{B(0,1)},
\end{cases}
\end{equation}
is shown in Ikeda--Sobajima \cite{IkedaSobajima2019} by the test function method 
via the use of the quantity $\int_{\Omega}u(t,x)\log |x|\,dx$.
Actually, Grigor'yan and Saloff-Coste 
pointed out in \cite{GS2002} that 
the heat kernel $k_{\Omega}$ of the linear problem 
\eqref{intro:eq2:linear} with
$\Omega=\R^2\setminus \overline{B(0,1)}$
satisfies 
\[
k_{\Omega}(t,x,y)\sim 
\frac{(1+\log|x|)(1+\log|y|)}
{(1+\log|x|+\log \sqrt{t})
(1+\log|x|+\log \sqrt{t})}
\times
G(t,x), \ \ t\geq 1, |x|,|y|\gg 1. 
\]
This provides that the usual mass 
for $e^{t\Delta_\Omega}u_0$
(the solution of the linear problem \eqref{intro:eq2:linear})
has a logarithmic decay:
\[
\int_{\Omega}e^{t\Delta_{\Omega}}u_0(x)\,dx\leq \frac{C}{1+\log (1+t)}\int_{\Omega}u_0(x)(1+\log |x|)\,dx, \quad t\geq 0
\]
(an alternative proof can be found in 
Ikeda--Sobajima--Taniguchi--Wakasugi \cite{IkedaSobajimaTaniguchiWakasugi2024} 
by the comparison principle). 
In the background of the above phenomenon, 
the recurrence property of the Brownian motion in $\R^2$ 
plays an important role.
Roughly speaking, no Brownian motions escape to the spatial infinity
(all Brownian motions trapped to the obstacle $\mathcal{O}$);
note that this aspect can be seen in Pinsky \cite{Pinsky2000}. 
For the study of one-dimensional case, so-called first momentum 
$\int_0^\infty u(t,x) x \,dx$ is effectively used in 
the analysis of a non-local problem 
in Cort\'{a}zar--Elgueta--Quir\'{o}s--Wolanski \cite{CEQW2016}.
A further detailed analysis for heat kernels in two-dimensional case
via the logarithmic Sobolev inequality can be found in \cite{CGQ_arxiv}.

Here we present a different perspective. 
It is remarkable that 
the weighted measure $d\mu=\phi(x)\,dx$ 
with the following choice 
actually acts as an invariant measure for the semigroup $e^{t\Delta_{\Omega}}$:
\begin{equation}\label{intro:phi_equation}
\begin{cases}
\Delta \phi=0 &\text{in}\ \Omega,  
\\
\phi=0 &\text{on}\ \pa\Omega,  
\\
\phi(x)\sim 
\phi_0(x)=
\begin{cases}
x&\text{if}\ N=1, 
\\
\log |x| &\text{if}\ N=2, 
\\
1-|x|^{2-N} &\text{if}\ N\geq 3 
\end{cases}
&\text{as}\ t\to \infty
\end{cases}
\end{equation}
and $\phi_0$ is the explicit harmonic function vanishing on $\pa B(0,1)$ $(N\geq 2)$ or 
at $x=0$ $(N=1)$. 
Namely, 
the mass of $e^{t\Delta_{\Omega}}u_0$ with respect to the measure $d\mu$ 
is preserved: 
\[
\int_{\Omega}e^{t\Delta_\Omega}u_0\,d\mu=\int_{\Omega}u_0\,d\mu, \quad t\geq 0. 
\]
From this viewpoint,
we can analyze the asymptotic behavior for our target problem \eqref{intro:eq1}
in a unified manner for both cases $N\geq 3$ and $N=1,2$.
Then one can expect that $M_\Omega(u(\cdot))$ has 
crucial information for the large time behavior for \eqref{intro:eq1} 
for all dimensions.

In the present paper, we shall discuss the effect 
from the nonlinearity in the problem \eqref{intro:eq1} via the invariant measure $d\mu=\phi(x)\,dx$. 
More precisely, we focus our attention to the limit 
of the mass $M_{\Omega}(u(t))$ with respect to the invariant measure. 
Up to our knowledge, the cases $N=1,2$ have not been studied before. We will show, in the case $N=2$, a logarithmic decay of solution as expected due of the one mentioned by Grigor’yan and Saloff-Coste in \cite{GS2002} while for the case $N=1$ polynomial decay will be shown by using 
the translation into three-dimensional radially symmetric solutions of \eqref{intro:heatRN}.

Before stating the main result in the present paper, 
we give the definition of the solution of \eqref{intro:eq1}.

\begin{definition}[Mild solution]
Let $u_0\in L^1(\Omega)\cap L^\infty(\Omega)$ and $T>0$. 
A function $u:[0,T)\times \Omega\to \R$ is said to be a {\it mild solution} 
of \eqref{intro:eq1} in $(0,T)$ 
if $u$ satisfies the integral equation
\begin{equation}\label{integralform}
    u(t,x)=e^{t\Delta_{\Omega}}u_0(x)
    -\int_0^t\big(e^{(t-s)\Delta_{\Omega}}u(s,\cdot)^p\big)(x)\,ds
\end{equation}
for all $x\in \Omega$ and $t\in(0,T)$. 
If $u:[0,\infty)\times \Omega\to \R$
is a mild solution of \eqref{intro:eq1} 
in $(0,T)$ for all $T>0$, then 
$u$ is called global-in-time (mild) solution
of \eqref{intro:eq1}.
\end{definition}
\begin{remark}
The standard bootstrap argument gives that 
a mild solution $u$ of \eqref{intro:eq1} 
in $[0,T]$ have $C^1$-regularity with respect to $t$ and $C^2$-regularity with respect to $x$.
Therefore as a consequence, $u$ satisfies $\pa_tu-\Delta u=-u^p$ in the pointwise sense.
Since the nonlinearity is of absorbing type, 
by comparison principle we always have 
a global-in-time solution which satisfies 
$0\leq u(t,x)\leq \|u_0\|_{L^\infty(\Omega)}$ whenever $u_0$ is nonnegative.
\end{remark}

Now we are in a position to state the main result of the present paper
about the large time behavior of 
\[
M_{\Omega}(u(t))
=\int_{\Omega}u(t,x)\phi(x)\,dx
=\int_{\Omega}u_0(x)\phi(x)\,dx
-
\int_0^t\int_\Omega\big(u(s,x)\big)^p\phi(x)\,dx\,ds
\]
which is nonnegative and nonincreasing, and therefore, 
the limit as $t\to\infty$ always exists.

\begin{theorem}\label{thm:main}
Let $\phi$ satisfy \eqref{intro:phi_equation}.
Assume $u_0\in L^\infty(\Omega)$ satisfies 
$u_0\geq 0$, $u_0\not\equiv0$ and $M_{\Omega}(u_0)<\infty$. 
Then the corresponding global-in-time solution $u$ of \eqref{intro:eq1}
satisfies
\[
\lim_{t\to\infty}M_{\Omega}(u(t))
\begin{cases}
=0
&\text{if}\ 1<p\leq \min\{2,1+\frac{2}{N}\}, 
\\
>0
&\text{if}\ p>\min\{2,1+\frac{2}{N}\}.
\end{cases}
\]
\end{theorem}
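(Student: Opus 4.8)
The plan is to prove the two cases separately, relying throughout on the pointwise comparison $0\le u(t,x)\le v(t,x):=[e^{t\Delta_{\Omega}}u_0](x)$ (valid since the nonlinearity is absorbing, by the comparison principle), on the monotone identity
\[
\frac{d}{dt}M_{\Omega}(u(t))=-\int_{\Omega}u(t,x)^p\phi(x)\,dx\le 0,
\]
which guarantees that $M_\infty:=\lim_{t\to\infty}M_{\Omega}(u(t))$ exists in $[0,\infty)$, and on sup-norm decay estimates for the linear flow: $\|v(t)\|_{L^\infty(\Omega)}\le C(1+t)^{-N/2}$ when $N\ge 3$ (domination of the Dirichlet heat kernel by the Gaussian, together with $u_0\in L^1\cap L^\infty$), and $\|v(t)\|_{L^\infty(\Omega)}\le C(1+t)^{-1}$ when $N\le 2$ (with $C=C(u_0)$). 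For $N=2$ the latter again follows from the Gaussian domination, while for $N=1$, where $\Omega=(0,\infty)$ and $\phi(x)=x$, it follows from the odd-reflection representation of $v$ and the elementary inequality $1-e^{-xy/t}\le xy/t$ (which supplies one extra power of $t^{-1}$); equivalently one views $v(t,x)/x$ as a radial solution of the heat equation in $\R^3$, whose sup norm decays like $t^{-3/2}$.

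\emph{Case $p>\min\{2,1+\frac{2}{N}\}$.} Bounding the absorption term by
\[
\int_{\Omega}u(t)^p\phi\,dx\le\|u(t)\|_{L^\infty}^{p-1}\int_{\Omega}u(t)\phi\,dx\le\|v(t)\|_{L^\infty}^{p-1}M_{\Omega}(u(t)),
\]
the mass identity becomes the differential inequality $\frac{d}{dt}M_{\Omega}(u(t))\ge-a(t)M_{\Omega}(u(t))$ with $a(t):=\|v(t)\|_{L^\infty}^{p-1}$. The decay estimates give $a(t)\le C(1+t)^{-N(p-1)/2}$ when $N\ge3$ and $a(t)\le C(1+t)^{-(p-1)}$ when $N\le2$, and the assumption $p>\min\{2,1+\frac{2}{N}\}$ is precisely the threshold making $\int_0^\infty a(t)\,dt<\infty$. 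Gronwall's lemma then gives $M_{\Omega}(u(t))\ge M_{\Omega}(u_0)\exp(-\int_0^\infty a(s)\,ds)$ for all $t$, hence
\[
M_\infty\ge M_{\Omega}(u_0)\exp\Big(-\int_0^\infty a(s)\,ds\Big)>0,
\]
because $M_{\Omega}(u_0)=\int_{\Omega}u_0\phi\,dx>0$ (as $u_0\ge0$, $u_0\not\equiv0$ and $\phi>0$ in $\Omega$).

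\emph{Case $1<p\le\min\{2,1+\frac{2}{N}\}$.} I argue by contradiction, assuming $M_\infty>0$. The point is that $u$ cannot spread faster than the linear flow: using $u\le v$ and Gaussian-type upper bounds for the Dirichlet heat kernel, I would establish a $t_1\ge1$ and a function $\omega$ with $\omega(K)\to0$ as $K\to\infty$ such that
\[
\int_{\{x\in\Omega:\,|x|\ge K\sqrt t\,\}}v(t,x)\phi(x)\,dx\le\omega(K)\qquad(t\ge t_1).
\]
Fix $K$ with $\omega(K)\le M_\infty/2$ and put $A_t:=\{x\in\Omega:|x|<K\sqrt t\}$, $\mu(A_t):=\int_{A_t}\phi\,dx$. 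Since $M_{\Omega}(u(t))\ge M_\infty$, the tail bound gives $\int_{A_t}u(t)\phi\,dx\ge M_\infty/2$ for $t\ge t_1$, and Jensen's inequality for the convex map $s\mapsto s^p$ against the probability measure $\phi\,dx/\mu(A_t)$ on $A_t$ yields
\[
\int_{\Omega}u(t)^p\phi\,dx\ge\int_{A_t}u(t)^p\phi\,dx\ge\frac{(M_\infty/2)^p}{\mu(A_t)^{p-1}}\qquad(t\ge t_1).
\]
A direct computation shows that $\mu(A_t)$ is comparable to $t$ if $N=1$, to $t\log t$ if $N=2$, and to $t^{N/2}$ if $N\ge3$; in each case $p\le\min\{2,1+\frac{2}{N}\}$ makes $\int_{t_1}^\infty\mu(A_t)^{-(p-1)}\,dt=\infty$. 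Hence $\int_{t_1}^\infty\int_{\Omega}u^p\phi\,dx\,dt=\infty$, contradicting $\int_{t_1}^\infty\int_{\Omega}u^p\phi\,dx\,dt=M_{\Omega}(u(t_1))-M_\infty<\infty$. Therefore $M_\infty=0$.

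The hard part is the weighted tail estimate for $v(t)$ in the second case. For $N\ge3$ it is immediate, since $\phi$ is bounded and $v\le G(t,\cdot)*u_0$. For $N=2$ it is delicate: $\phi(x)\sim\log|x|$ is unbounded and the Dirichlet heat kernel in the exterior domain carries the logarithmic corrections of Grigor'yan--Saloff-Coste, so one must check that the logarithms produced by the weight $\phi$ are exactly matched by those in the kernel (or, alternatively, control the weighted tail by a comparison-principle argument in the spirit of Ikeda--Sobajima--Taniguchi--Wakasugi). For $N=1$, where $\phi(x)=x$ is unbounded, the cleanest route is again the reduction to radial solutions in $\R^3$: $w(t,z):=u(t,|z|)/|z|$ satisfies $\partial_t w-\Delta w=-|z|^{p-1}w^p$ in $\R^3$ with $\int_{\R^3}w(t)\,dz=4\pi M_{\Omega}(u(t))$, and the tail bound becomes the familiar statement that a three-dimensional heat flow places only a small fraction of its mass outside $B(0,K\sqrt t)$. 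Once this localization is available, computing $\mu(A_t)$ and applying Jensen are routine, and the differential-inequality argument in the first case is entirely elementary.
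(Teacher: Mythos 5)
Your non-vanishing case is correct and is essentially the paper's argument in Gronwall form: the paper constructs the explicit subsolution $h(t)S(t)u_0$ with $h(t)=\bigl(1+(p-1)\int_0^t\|S(\tau)u_0\|_{L^\infty}^{p-1}d\tau\bigr)^{-1/(p-1)}$, which is exactly the solution of your differential inequality, and the same linear decay rates ($t^{-N/2}$ for $N\geq 3$, $t^{-1}$ for $N\leq 2$, the latter needing the Dirichlet condition when $N=1$) identify the threshold $p>\min\{2,1+\frac{2}{N}\}$. Your vanishing case, however, is a genuinely different route: the paper runs a test-function method with cut-offs $\varphi_R$ supported on parabolic annuli plus the auxiliary function $Y(R)$ (needed precisely to reach the endpoint $(N,p)=(2,2)$), whereas you localize the conserved weighted mass in $\{|x|<K\sqrt{t}\}$ via a tail estimate for the linear flow and then apply Jensen's inequality; your computation of $\mu(A_t)$ and the divergence of $\int\mu(A_t)^{1-p}dt$ at and below the critical exponent are all correct, and this argument has the advantage of not requiring the $Y(R)$ device.

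The gap is the weighted tail estimate
\[
\int_{\{|x|\geq K\sqrt{t}\}}v(t,x)\,\phi(x)\,dx\leq \omega(K),\qquad \omega(K)\to 0,
\]
in dimension $N=2$, which you explicitly leave unproved. This is not a routine verification: the obvious tool, Gaussian domination $v(t,\cdot)\leq G(t,\cdot)*u_0$, fails here, because the weight $\phi(x)\sim\log|x|\sim\log\sqrt{t}$ on the relevant region produces a bound of order $\epsilon(K)\log\sqrt{t}\,\|u_0\|_{L^1}$, which is unbounded in $t$ for fixed $K$. One genuinely needs the extra $\bigl(1+\log\sqrt{t}\,\bigr)^{-1}$ gain of the Dirichlet semigroup — i.e.\ the two-sided Grigor'yan--Saloff-Coste kernel bounds, or the comparison argument of Ikeda--Sobajima--Taniguchi--Wakasugi — and the cancellation of logarithms must be checked on the whole range $|x|\geq K\sqrt{t}$, including $|x|\gg t$ where $\log|x|$ is no longer comparable to $\log\sqrt{t}$. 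Since $(N,p)=(2,2)$ is exactly the borderline case that forced the paper to modify the standard test-function method, the entire difficulty of the theorem is concentrated in this one unproven step; until it is supplied, your proof of the vanishing case is complete only for $N=1$ and $N\geq 3$.
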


According to Theorem \ref{thm:main}, 
we can assert that 
in the case $p\leq \min\{2,1+\frac{2}{N}\}$
the effect of nonlinearity is dominant. 
In the other case $p>\min\{2,1+\frac{2}{N}\}$, 
the problem \eqref{intro:eq1} still seems to have some properties of 
the corresponding linear problem \eqref{intro:eq2:linear}.
The following theorem 
clarifies the linear asymptotic profile of solutions of \eqref{intro:eq1}.

\begin{theorem}[Asymptotic behavior]
Let $u$ be as in Theorem \ref{thm:main}. 
If $p>\min\{2,1+\frac{2}{N}\}$, then
$u$ has the following asymptotic behavior:
for every $1\leq q\leq \infty$
\[
\|u(t,\cdot)-e^{t\Delta_\Omega}u_\infty\|_{L^q(\Omega)}=
\begin{cases}
o\big(t^{-(1-\frac{1}{2q})}\big)
&\text{if}\ N=1, 
\\
o\big( t^{-(1-\frac{1}{q})}(\log t)^{-1}\big)
&\text{if}\ N=2, 
\\
o\big(t^{-\frac{N}{2}(1-\frac{1}{q})}\big)
&\text{if}\ N\geq 3 
\end{cases}
\]
as $t\to \infty$, where $u_\infty$ is given by 
\[
u_{\infty}(x)=u_0(x)-\int_{0}^\infty \big(u(t,x)\big)^p\,dt, \quad x\in \Omega.
\]
\end{theorem}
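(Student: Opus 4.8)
\emph{Proof proposal.}
The starting point is the mild identity~\eqref{integralform} together with the formula for $M_{\Omega}(u(t))$ recorded just above Theorem~\ref{thm:main}. Since $M_{\Omega}(u(t))$ is non-increasing and convergent, $\int_0^\infty m(s)\,ds<\infty$ with $m(s):=\int_{\Omega}u(s,x)^p\phi(x)\,dx=M_{\Omega}\big(u(s,\cdot)^p\big)$; by Tonelli this forces $\int_0^\infty u(s,\cdot)^p\,ds$ to converge a.e.\ and in $L^1(\phi\,dx)$, so $u_\infty$ is well defined, lies in $L^1(\phi\,dx)$ and satisfies $u_\infty\le u_0$. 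As $e^{t\Delta_{\Omega}}$ is positive and leaves $\phi\,dx$ invariant, it is bounded on $L^1(\phi\,dx)$; applying it to $u_\infty$ and using Fubini gives $e^{t\Delta_{\Omega}}u_\infty=e^{t\Delta_{\Omega}}u_0-\int_0^\infty e^{t\Delta_{\Omega}}\big(u(s,\cdot)^p\big)\,ds$, and subtracting this from~\eqref{integralform} yields
\[
u(t,\cdot)-e^{t\Delta_{\Omega}}u_\infty=\underbrace{\int_0^t\big(e^{t\Delta_{\Omega}}-e^{(t-s)\Delta_{\Omega}}\big)\big(u(s,\cdot)^p\big)\,ds}_{=:\,I_1(t)}+\underbrace{\int_t^\infty e^{t\Delta_{\Omega}}\big(u(s,\cdot)^p\big)\,ds}_{=:\,I_2(t)}.
\]
It remains to prove $\|I_1(t)\|_{L^q}+\|I_2(t)\|_{L^q}=o(\kappa_N(t))$, where $\kappa_N(t)$ is the rate in the statement. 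I will freely use the linear estimates established earlier: the weighted decay $\|e^{\tau\Delta_{\Omega}}f\|_{L^Q(\Omega)}\lesssim\kappa_N^{(Q)}(\tau)\,M_{\Omega}(|f|)$ for $\tau\ge1$ (with $\kappa_N^{(Q)}$ the rate of the statement with $q$ replaced by $Q$; for $N=2$ resting on the Grigor'yan--Saloff-Coste kernel bound), the Gaussian smoothing bounds $\|e^{\tau\Delta_{\Omega}}\|_{L^r\to L^Q}\lesssim\tau^{-\frac N2(\frac1r-\frac1Q)}$ and $\|\Delta_{\Omega}e^{\tau\Delta_{\Omega}}\|_{L^Q\to L^Q}\lesssim\tau^{-1}$, and the comparison bound $0\le u(s,\cdot)\le e^{s\Delta_{\Omega}}u_0$.

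\emph{The tail $I_2$ and the low part of $I_1$.} Since $\kappa_N$ is eventually decreasing, $\|I_2(t)\|_{L^q}\le\int_t^\infty\|e^{t\Delta_{\Omega}}(u(s,\cdot)^p)\|_{L^q}\,ds\lesssim\kappa_N(t)\int_t^\infty m(s)\,ds=o(\kappa_N(t))$. For $I_1$ split $\int_0^t=\int_0^{t/2}+\int_{t/2}^t=:I_{1,a}+I_{1,b}$. On $[0,t/2]$ use $e^{t\Delta_{\Omega}}g-e^{(t-s)\Delta_{\Omega}}g=\int_{t-s}^t\Delta_{\Omega}e^{\sigma\Delta_{\Omega}}g\,d\sigma$ together with $\|\Delta_{\Omega}e^{\sigma\Delta_{\Omega}}(u(s,\cdot)^p)\|_{L^q}\lesssim\sigma^{-1}\kappa_N(\sigma)m(s)$ (split $e^{\sigma\Delta_{\Omega}}=e^{(\sigma/2)\Delta_{\Omega}}e^{(\sigma/2)\Delta_{\Omega}}$ and combine analyticity with the weighted decay); since $\sigma\mapsto\sigma^{-1}\kappa_N(\sigma)$ is decreasing and $\sigma\ge t-s\ge t/2$ here, the contribution of each $s$ is $\lesssim s\,t^{-1}\kappa_N(t)\,m(s)$, so $\|I_{1,a}(t)\|_{L^q}\lesssim t^{-1}\kappa_N(t)\int_0^{t/2}s\,m(s)\,ds=o(\kappa_N(t))$, because $t^{-1}\int_0^{t/2}s\,m(s)\,ds\to0$ whenever $m\in L^1(0,\infty)$.

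\emph{The high part $I_{1,b}$.} Write $\|I_{1,b}(t)\|_{L^q}\le\int_{t/2}^t\|e^{t\Delta_{\Omega}}(u(s,\cdot)^p)\|_{L^q}\,ds+\int_{t/2}^t\|e^{(t-s)\Delta_{\Omega}}(u(s,\cdot)^p)\|_{L^q}\,ds$; the first integral is $\lesssim\kappa_N(t)\int_{t/2}^\infty m(s)\,ds=o(\kappa_N(t))$ as for $I_2$. In the second, $t-s$ may be arbitrarily small, so the weighted bound is useless; instead fix $r\le q$ with $\frac N2(\frac1r-\frac1q)<1$ and use $\|e^{(t-s)\Delta_{\Omega}}(u(s,\cdot)^p)\|_{L^q}\le\|e^{(t-s)\Delta_{\Omega}}\|_{L^r\to L^q}\,\|u(s,\cdot)\|_{L^{pr}}^p$, bounding $\|u(s,\cdot)\|_{L^{pr}}\le\|e^{s\Delta_{\Omega}}u_0\|_{L^{pr}}$ by the linear $L^{pr}$-decay. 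Since $s\asymp t$ on $(t/2,t)$ and $(t-s)^{-\frac N2(\frac1r-\frac1q)}$ is integrable there, a direct computation gives, for $N\ge3$,
\[
\int_{t/2}^t\|e^{(t-s)\Delta_{\Omega}}(u(s,\cdot)^p)\|_{L^q}\,ds\ \lesssim\ t^{\,1-\frac N2(p-1)}\,\kappa_N(t),
\]
which is $o(\kappa_N(t))$ precisely when $p>1+\frac2N=\min\{2,1+\frac2N\}$, i.e.\ under the hypothesis. For $N=2$ the same computation carried out with the logarithmically improved estimates produces a gain $\lesssim t^{2-p}$ up to powers of $\log t$, hence $o(\kappa_2(t))$ when $p>2=\min\{2,1+\frac2N\}$. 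For $N=1$ the reduction $e^{t\Delta_{(0,\infty)}}f(x)=x\,[e^{t\Delta}\widetilde f\,](x)$, $\widetilde f(y)=f(|y|)/|y|$, to radial solutions of the heat equation in $\R^3$ supplies the linear estimates for $e^{t\Delta_{(0,\infty)}}$ at the rate $\kappa_1(t)=t^{-(1-\frac1{2q})}$ (in particular $\|u(s,\cdot)\|_\infty\lesssim s^{-1}M_{(0,\infty)}(u_0)$), and running the analogous argument on $\Omega=(0,\infty)$ with $\phi(x)=x$ again yields $p>2$.

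\emph{Main obstacle.} The proof hinges entirely on the last displayed estimate for $\int_{t/2}^t\|e^{(t-s)\Delta_{\Omega}}(u(s,\cdot)^p)\|_{L^q}\,ds$: one must balance the time-decay of $u(s,\cdot)$ (from comparison with $e^{s\Delta_{\Omega}}u_0$) against the short-time blow-up of $\|e^{(t-s)\Delta_{\Omega}}\|_{L^r\to L^q}$ sharply enough to reproduce the exact threshold $\min\{2,1+\frac2N\}$ rather than a larger exponent, and for $N=2$ the logarithmic factors must be tracked so that $t^{2-p}$ (times powers of $\log t$) really tends to $0$. The remaining pieces — the tail $I_2$, the part $I_{1,a}$, the first half of $I_{1,b}$, and checking the precise $L^q$ rates for every $1\le q\le\infty$ — are routine consequences of the linear estimates, the comparison principle, and dominated convergence.
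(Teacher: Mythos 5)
Your proposal is correct, and it reaches the conclusion by a genuinely different organization of the argument than the paper's. The paper does not work with the mild identity \eqref{integralform} directly: it introduces the auxiliary function $F(t,x)=\int_t^\infty u(s,x)^p\,ds$, proves decay of $\|F(t)\|_{L^q}$, $\|F(t)\|_{L^1_\phi}$ and $\|\pa_tF(t)\|_{L^q}$ at the rate $\widetilde{\mathcal{E}}_N(t)$ (Lemma \ref{lem:diffusionstate}), and then solves the auxiliary inhomogeneous heat problem $\pa_t\widetilde u-\Delta\widetilde u=F$, $\widetilde u(0)=0$, observing that $u(t)-S(t)u_\infty=\pa_t\widetilde u(t)$; differentiating the Duhamel formula gives the three terms $\int_{t/2}^tS(t-s)\pa_tF(s)\,ds+S(\frac t2)F(\frac t2)+\int_0^{t/2}\Delta S(t-s)F(s)\,ds$. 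Your decomposition $u(t)-S(t)u_\infty=I_1+I_2$ is an integrated-by-parts rearrangement of the same cancellation, and the two proofs use the same ingredients in the same places: splitting time at $t/2$, the analyticity bound $\|\Delta S(\sigma)\|_{L^q\to L^q}\lesssim\sigma^{-1}$ composed with the weighted decay for the far past (your $I_{1,a}$, the paper's third term), and the tail of a convergent integral for the future and recent past. What your route buys is directness (no auxiliary PDE, no bookkeeping of $F$); what the paper's buys is a clean quantitative rate $\widetilde{\mathcal{E}}_N(t)+\frac1t\int_0^t\widetilde{\mathcal{E}}_N(s)\,ds$ rather than a bare $o(\cdot)$. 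Two small remarks. First, the weighted estimate you invoke, $\|S(\tau)f\|_{L^q}\lesssim\kappa_N^{(q)}(\tau)M_\Omega(|f|)$, is stronger than what Lemma \ref{lem:linear-Linfty-decay} states (the paper keeps $\|f\|_{L^q}+\|f\|_{L^1_\phi}$ on the right); this costs you nothing, since with $f=u(s,\cdot)^p$ the extra term $\|u(s)^p\|_{L^q}=\|u(s)\|_{L^{pq}}^p$ decays integrably by comparison with $S(s)u_0$, but you should either use the lemma as stated or justify the boundary-weighted kernel bound separately. Second, your $L^r\to L^q$ smoothing detour for the second half of $I_{1,b}$ is unnecessary: the plain $L^q$-contraction $\|S(t-s)(u(s)^p)\|_{L^q}\le\|u(s)\|_{L^{pq}}^p$ already yields $\int_{t/2}^t\lesssim t^{1-\frac N2(p-\frac1q)}\mathcal{E}_N(t)^{-p}$, which is exactly the paper's bound for its term $\int_{t/2}^tS(t-s)\pa_tF(s)\,ds$ and gives the threshold $p>\min\{2,1+\frac2N\}$ in all dimensions, logarithms included.
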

\begin{remark}
Although $u_\infty$ may changes its sign, in this case we can see by the monotone convergence theorem that 
$u_\infty$ also satisfies $M_{\Omega}(|u_\infty|)<+\infty$ with 
\[
M_{\Omega}(u_\infty)=\int_{\Omega}u_\infty(x)\phi(x)\,dx=\lim\limits_{t\to\infty}M_{\Omega}(u(t))>0. 
\]
By adopting the argument in \cite[Section 3]{Sobajima_preprint}, 
the following profile can be shown:
\[
\begin{cases}
\limsup\limits_{t\to\infty}
\Big(t\|e^{t\Delta_{\Omega}}u_\infty\|_{L^\infty(\Omega)} \Big)>0
&\text{if}\ N=1, 
\\
\limsup\limits_{t\to\infty}
\Big(t(\log t)\|e^{t\Delta_{\Omega}}u_\infty\|_{L^\infty(\Omega)} \Big)>0
&\text{if}\ N=2, 
\\
\limsup\limits_{t\to\infty}
\Big(t^{\frac{N}{2}}\|e^{t\Delta_{\Omega}}u_\infty\|_{L^\infty(\Omega)} \Big)>0
&\text{if}\ N\geq 3.
\end{cases}
\]
Therefore 
(although the description of asymptotic profiles is not unique),
the profile $e^{t\Delta_\Omega}u_\infty$ can be justified as the asymptotic profile of $u$.
\end{remark}

Once we find the linear asymptotic behavior, 
it turns out that the result 
for the precise asymptotics in \cite{DR2025JDE} is applicable.
As a consequence, we find the following assertion. 
\begin{corollary}
\label{cor:Ngeq3}
If $N\geq 3$ and $p>1+\frac{2}{N}$, then 
the solution $u$ in Theorem \ref{thm;asymptotics} further satisfies 
that for every $1\leq q\leq \infty$, 
\[
\|u(t,\cdot)-M_{\Omega}(u_\infty)\phi(\cdot) G(t,\cdot)\|_{L^q(\Omega)}=o\big(t^{-\frac{N}{2}(1-\frac{1}{q})}\big)
\]
as $t\to\infty$, where $M_{\Omega}(u_\infty)=\lim\limits_{t\to\infty}M_{\Omega}(u(t))>0$. 
\end{corollary}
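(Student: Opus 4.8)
The backbone is the mass identity already recorded before the statement: with $m(t):=M_\Omega(u(t))$ one has $m(t)=M_\Omega(u_0)-\int_0^t\int_\Omega u(s,x)^p\phi(x)\,dx\,ds$, so $m$ is nonnegative and nonincreasing and $m(\infty):=\lim_{t\to\infty}m(t)=M_\Omega(u_0)-\int_0^\infty\int_\Omega u^p\phi\,dx\,dt$; thus the dichotomy amounts to deciding whether the total dissipation equals $M_\Omega(u_0)$ or is strictly smaller. The tool I would use throughout is the linear smoothing estimate
\[
\|e^{t\Delta_\Omega}f\|_{L^\infty(\Omega)}\le C\,\omega(t)\,\|f\|_{L^1(\Omega,\,\phi\,dx)}\quad(t\ge1),\qquad
\omega(t)=\begin{cases}t^{-N/2}&N\ge3,\\ (t\log t)^{-1}&N=2,\\ t^{-1}&N=1,\end{cases}
\]
valid alongside $\|e^{t\Delta_\Omega}f\|_\infty\le\|f\|_\infty$. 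I would deduce it from $\sup_{x,y\in\Omega}k_\Omega(t,x,y)/\phi(y)\le C\omega(t)$ for the Dirichlet heat kernel, which in turn follows from heat-kernel bounds sensitive both to the Dirichlet boundary (so that $k_\Omega(t,x,y)$ vanishes like $\phi(y)$ near $\partial\Omega$) and to the behaviour at infinity: Gaussian bounds for $N\ge3$, the Grigor'yan and Saloff-Coste bounds quoted in the introduction for $N=2$ (the logarithmic gain being essential), and for $N=1$ the explicit kernel $k_t(x,y)=(4\pi t)^{-1/2}(e^{-(x-y)^2/4t}-e^{-(x+y)^2/4t})$, for which $0\le k_t(x,y)\le(4\pi t)^{-1/2}\min\{1,xy/t\}e^{-(x-y)^2/4t}$ gives $k_t(x,y)/y\le Ct^{-1}$ after an elementary case analysis (the radial three-dimensional reduction $u(t,r)\mapsto u(t,r)/r$, under which $M_\Omega$ becomes the usual mass of a radial function in $\R^3$, is the viewpoint suggested in the introduction and is also convenient in what follows). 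Since the absorption has the favourable sign, the comparison principle gives $0\le u(t,\cdot)\le e^{t\Delta_\Omega}u_0$, hence $\|u(t)\|_{L^\infty(\Omega)}\le C\omega(t)M_\Omega(u_0)$ for $t\ge1$ and $\le\|u_0\|_\infty$ for $0<t\le1$.

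\medskip
Suppose first $p>\min\{2,1+\frac{2}{N}\}$. Since $\int_\Omega u^p\phi\le\|u(t)\|_\infty^{p-1}\int_\Omega u\phi=\|u(t)\|_\infty^{p-1}m(t)$, the mass identity gives $m'(t)\ge-\|u(t)\|_\infty^{p-1}m(t)$, hence $m(t)\ge M_\Omega(u_0)\exp\bigl(-\int_0^t\|u(s)\|_\infty^{p-1}\,ds\bigr)$. By the previous paragraph,
\[
\int_0^\infty\|u(s)\|_\infty^{p-1}\,ds\le\|u_0\|_\infty^{p-1}+C^{p-1}M_\Omega(u_0)^{p-1}\int_1^\infty\omega(s)^{p-1}\,ds<\infty,
\]
the last integral converging precisely when $p>\min\{2,1+\frac{2}{N}\}$ (for $N\ge3$ this is $\frac{N}{2}(p-1)>1$; for $N=1,2$ it is $p-1>1$). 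As $M_\Omega(u_0)>0$ (because $u_0\not\equiv0$ and $\phi>0$ in $\Omega$), we obtain $m(\infty)>0$.

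\medskip
Now suppose $1<p\le\min\{2,1+\frac{2}{N}\}$ and, for contradiction, $\ell:=m(\infty)>0$; then $\int_0^\infty\int_\Omega u^p\phi<\infty$ and $m(t)\ge\ell$ for all $t$. The key point is that the mass of $u(t)$ cannot concentrate near the obstacle nor escape faster than the diffusion scale $\sqrt t$. Concentration is excluded since $\int_{\Omega\cap B_\rho}u(t)\phi\le\|u(t)\|_\infty\int_{\Omega\cap B_\rho}\phi\to0$ for fixed $\rho$. For the escape, using $u(t)\le e^{t\Delta_\Omega}u_0$, splitting $u_0=u_0\mathbbm1_{B_{K\sqrt t/2}}+u_0\mathbbm1_{\Omega\setminus B_{K\sqrt t/2}}$, invoking the conservation $M_\Omega(e^{t\Delta_\Omega}g)=M_\Omega(g)$ for the far part and, for the near part, the identity $e^{t\Delta_\Omega}\phi=\phi$ (true because $\phi$ is the stationary solution, whence $\int_\Omega k_\Omega(t,x,y)\phi(x)\,dx=\phi(y)$) together with the Gaussian tails of $k_\Omega$, one shows that for every $\varepsilon>0$ there is $K$ with $\limsup_{t\to\infty}\int_{\Omega\setminus B_{K\sqrt t}}u(t)\phi\,dx\le\varepsilon$. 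Fixing $\varepsilon=\ell/4$, the corresponding $K$, and a radius $\rho$ with $c_\rho:=\inf_{\{|x|\ge\rho\}\cap\Omega}\phi>0$, we get $\int_{D_t}u(t)\phi\,dx\ge\ell/2$ for all large $t$, where $D_t=\{x\in\Omega:\rho\le|x|\le K\sqrt t\}$. Hölder's inequality on $D_t$ applied to $u\phi^{1/p}$ and $\phi^{1-1/p}$ then yields
\[
\int_\Omega u(t)^p\phi\,dx\ \ge\ \int_{D_t}u(t)^p\phi\,dx\ \ge\ \frac{(\ell/2)^p}{\bigl(\int_{D_t}\phi\,dx\bigr)^{p-1}}\ \ge\ c(\ell,K)\,\omega(t)^{p-1}\qquad(t\ge t_0),
\]
because $\int_{B_{K\sqrt t}}\phi\,dx\le C_K\,\omega(t)^{-1}$: indeed $\phi$ is bounded for $N\ge3$ (so the integral is $\lesssim(K\sqrt t)^N\asymp t^{N/2}$), $\phi(x)\lesssim1+\log^+|x|$ for $N=2$ (so it is $\lesssim(K\sqrt t)^2\log(K\sqrt t)\asymp t\log t$), and $\phi(x)=x$ for $N=1$ (so it is $(K\sqrt t)^2/2\asymp t$). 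Integrating in time,
\[
M_\Omega(u_0)-\ell=\int_0^\infty\!\!\int_\Omega u^p\phi\,dx\,dt\ \ge\ c(\ell,K)\int_{t_0}^\infty\omega(t)^{p-1}\,dt=\infty,
\]
since $\int_1^\infty\omega(t)^{p-1}\,dt$ diverges exactly when $p\le\min\{2,1+\frac{2}{N}\}$ (for $N\ge3$, $\int^\infty t^{-N(p-1)/2}\,dt$ diverges iff $p\le1+\frac{2}{N}$; for $N=1,2$, $\int^\infty t^{-(p-1)}\,dt$, resp.\ $\int^\infty(t\log t)^{-(p-1)}\,dt$, diverges iff $p\le2$). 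This contradicts $m(\infty)\ge0$, so $m(\infty)=0$.

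\medskip
The genuine difficulty lies in the two ingredients specific to exterior domains: the sharp smoothing estimate of the first paragraph --- which requires heat-kernel bounds capturing both the Dirichlet boundary decay and the behaviour at infinity, and whose logarithmic gain for $N=2$ is exactly what makes the endpoint $p=2$ work --- and the "no fast escape of mass" estimate of the third paragraph, whose proof combines the comparison principle, the conservation of $M_\Omega$ under $e^{t\Delta_\Omega}$, the harmonicity identity $e^{t\Delta_\Omega}\phi=\phi$, and the Gaussian tails of $k_\Omega$. Everything else (Gronwall, Hölder, the mass identity) is soft.
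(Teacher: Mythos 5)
There is a fundamental mismatch here: what you have written is a proof of the mass dichotomy (Theorem 1.1 of the paper, $\lim_{t\to\infty}M_\Omega(u(t))=0$ iff $p\le\min\{2,1+\frac{2}{N}\}$), not of Corollary \ref{cor:Ngeq3}. The corollary asserts an $L^q$ \emph{asymptotic profile}: $\|u(t,\cdot)-M_\Omega(u_\infty)\phi(\cdot)G(t,\cdot)\|_{L^q(\Omega)}=o\big(t^{-\frac{N}{2}(1-\frac{1}{q})}\big)$ for every $1\le q\le\infty$. Nothing in your argument addresses this: you never introduce $u_\infty$, never compare $u(t)$ with a linear evolution, and never produce an $L^q$ estimate with the rate $t^{-\frac{N}{2}(1-\frac{1}{q})}$. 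The only part of the corollary your proposal touches is the side remark $M_\Omega(u_\infty)=\lim_{t\to\infty}M_\Omega(u(t))>0$, which your Gronwall/comparison argument in the second paragraph does establish (it is essentially the paper's proof of Theorem 3.2, phrased with $m'(t)\ge-\|u(t)\|_\infty^{p-1}m(t)$ instead of the subsolution $h(t)S(t)u_0$). The entire third paragraph (the contradiction argument for the subcritical case) is irrelevant to the corollary, which assumes $p>1+\frac{2}{N}$.

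The missing content is twofold. First, the nonlinear-to-linear reduction: one must show $\|u(t)-S(t)u_\infty\|_{L^q}=o\big(t^{-\frac{N}{2}(1-\frac{1}{q})}\big)$ with $u_\infty=u_0-\int_0^\infty u(s,\cdot)^p\,ds$. The paper does this (Theorem \ref{thm;asymptotics}) by introducing $F(t,x)=\int_t^\infty u(s,x)^p\,ds$, observing that $u(t)-S(t)u_\infty=\pa_t\widetilde{u}(t)$ for the solution $\widetilde u$ of the inhomogeneous heat problem with source $F$, and estimating the three pieces of the Duhamel splitting $\int_{t/2}^tS(t-s)\pa_tF(s)\,ds+S(t/2)F(t/2)+\int_0^{t/2}\Delta S(t-s)F(s)\,ds$; the decay of $F$ in $L^q$ and $L^1_\phi$ (Lemma \ref{lem:diffusionstate}) is where the hypothesis $p>1+\frac{2}{N}$ enters. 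Second, the linear asymptotics: for $f\in L^1(\Omega)$ and $N\ge3$ one has $\|S(t)f-M_\Omega(f)\phi(\cdot)G(t,\cdot)\|_{L^q}=o\big(t^{-\frac{N}{2}(1-\frac{1}{q})}\big)$, which the paper imports from Dom\'inguez-de-Tena--Rodr\'iguez-Bernal together with Lemma \ref{lem:unique-harmonic} (identifying their limit function $\lim_{t\to\infty}S(t)\mathbbm{1}_\Omega$ with the harmonic function $\phi$ normalized at infinity). The corollary is then the triangle inequality applied to these two estimates with $f=u_\infty$. Without both ingredients your proposal does not prove the stated result.
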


Now we briefly explain the idea for studying the large time behavior of $M_{\Omega}(u(\cdot))$.
In the study of the Fujita equation $\pa_tu-\Delta u=u^p$, 
it is effective to use supersolutions of the form $h(t)e^{t\Delta_\Omega}u_0$
to construct a global solution in the super-critical case. 
On the other hand, to show the blowup result for the Fujita equation
in the sub-critical and critical cases, 
so-called test function method is useful. 
Roughly speaking, the idea of the present paper 
is to adopt the above argument 
to the study of \eqref{intro:eq1} which has the opposite sign in front of the nonlinearity. 
Actually, functions of the form $h(t)e^{t\Delta_\Omega}u_0$ 
are helpful to find a sub-solution of \eqref{intro:eq1}, 
which has a positive mass with respect to the invariant measure $\phi(x)\,dx$
when 
$p$ is super-critical. 
If  $p$ is sub-critical or critical, then 
the vanishing $M_{\Omega}(u(t))\to 0$ $(t\to\infty)$ 
is proved by a modification of the test function method. 
The asymptotic behavior for the super-critical case
is verified via an argument similar to that in \cite{Sobajima2025JMAA}.

The organization of the present paper is as follows: 
In Section \ref{sec:prelim},
we summarize the fundamental properties of 
positive harmonic functions in $\Omega$ 
satisfying the Dirichlet boundary condition
and the decay estimates for the corresponding Dirichlet heat semigroup $e^{t\Delta_\Omega}$. 
In 
Sections \ref{sec:non-vanishing}
and 
\ref{sec:vanishing}, 
we discuss the case of non-vanishing 
and vanishing of the mass with respect to the invariant measure
via the argument explained above, respectively. 
Finally, we prove 
the linear asymptotic profile of solutions to \eqref{intro:eq1} 
in Section \ref{sec:asymptotics}.

\section{Preliminaries}\label{sec:prelim}

\subsection{Harmonic functions and 
Dirichlet heat semigroups}
In this subsection, we give a reasonable
harmonic function $\phi$ satisfying 
the same behavior as $\phi_0$, 
which will be used in the proof of the main theorem
(for the reader's convenience we refer 
 \cite[Lemma 2.2]{Han2} for $N=1$, 
\cite[Lemma 3.2]{Sobajima2021} for $N=2$
and 
\cite[Lemma 2.11]{Fino1} for $N\geq 3$).

\begin{lemma}\label{lem:harmonic}
The following assertions hold:
\begin{itemize}
\item[\bf (i)] 
If $\Omega=(0,\infty)$, then 
$\phi(x)=\phi_0(x)=x$ is the unique positive harmonic function 
satisfying $\phi(0)=0$ and 
$\lim\limits_{x\to \infty}\frac{\phi(x)}{\phi_0(x)}=1$.
\item[\bf (ii)] 
If $\Omega$ is an exterior domain in $\R^2$, then there exists a unique positive harmonic function $\phi$ satisfying $\lim\limits_{|x|\to \infty}\frac{\phi(x)}{\phi_0(x)}=1$. 
Moreover, there exists a positive constant $R_{\phi}>1$ such that 
for every $x\in\Omega$ with $|x|\geq R_{\phi}$,  
\begin{gather*}
 \frac{1}{2}\phi_0(x)\leq \phi(x)\leq 2\phi_0(x), \quad\frac{1}{2}\leq x\cdot \nabla \phi(x)\leq 2.
 \end{gather*}
\item[\bf (iii)] 
If $\Omega$ is an exterior domain in $\R^N$ 
$(N\geq 3)$, then there exists a unique positive harmonic function $\phi$ satisfying $\lim\limits_{|x|\to \infty}\frac{\phi(x)}{\phi_0(x)}=1$. 
Moreover, 
$\phi$ satisfies $0<\phi<1$ on $\Omega$ 
and 
there exists a positive constant $R_{\phi}>1$ such that 
for every $x\in\Omega$ with $|x|\geq R_{\phi}$,  
\[
|\nabla \phi(x)|\leq \frac{C}{|x|^{N-1}}. 
\]
\end{itemize}
In the cases {\bf (ii}) and {\bf (iii)}, one has for $x\in \Omega$ with $|x|>R_{0}$,
\[
\phi_0\left(\frac{x}{R_0}\right)\leq \phi(x)\leq \phi_0\left(\frac{x}{r_0}\right).
\]
\end{lemma}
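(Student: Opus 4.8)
The plan is to reduce every assertion to the maximum principle and to interior derivative estimates for harmonic functions; throughout, $\phi$ is understood to satisfy also the Dirichlet condition $\phi=0$ on $\partial\Omega$ from \eqref{intro:phi_equation} (for {\bf(i)} this is in the statement, and it is indispensable for uniqueness). Case {\bf(i)} is immediate: a harmonic function on $(0,\infty)$ is affine $\phi(x)=ax+b$, so $\phi(0)=0$ forces $b=0$, the normalization $\phi(x)/x=a\to1$ forces $a=1$, and $\phi(x)=x>0$, which also gives uniqueness. For existence when $N\ge3$ I would use the equilibrium–potential construction: let $\phi_R$ solve $\Delta\phi_R=0$ in $\Omega\cap B(0,R)$ with $\phi_R=0$ on $\partial\Omega$ and $\phi_R=1$ on $\partial B(0,R)$; the maximum principle gives $0\le\phi_R\le1$ and $\phi_{R'}\le\phi_R$ on $\Omega\cap B(0,R)$ for $R'>R$, so $\phi:=\lim_{R\to\infty}\phi_R$ exists, is harmonic in $\Omega$, and satisfies $0\le\phi\le1$, $\phi=0$ on $\partial\Omega$. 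When $N=2$ I would instead set $\phi(x)=\log|x|+v(x)$, where $v$ is the bounded solution of the exterior Dirichlet problem $\Delta v=0$ in $\Omega$, $v=-\log|x|$ on $\partial\Omega$; its solvability is classical (Perron's method, or the conformal map of $\Omega$ onto the exterior of a disk), and then $\phi$ is harmonic, vanishes on $\partial\Omega$, and $\phi-\log|x|$ is bounded.

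The quantitative statements come from comparing $\phi$ with the explicit harmonic functions $\phi_0(\cdot/R_0)$ and $\phi_0(\cdot/r_0)$ on $\R^N\setminus\{0\}\supset\Omega$. Since $\phi_0$ is increasing in $|x|$ and $\mathcal{O}\subset\overline{B(0,R_0)}$, we have $\phi_0(x/R_0)\le0=\phi(x)$ on $\partial\Omega$; since $\overline{B(0,r_0)}\subset\mathcal{O}$, we have $\phi_0(x/r_0)\ge0=\phi(x)$ on $\partial\Omega$. When $N\ge3$ the differences $\phi-\phi_0(\cdot/R_0)$ and $\phi_0(\cdot/r_0)-\phi$ are harmonic, nonnegative on $\partial\Omega$, and tend to $0$ at infinity, so the minimum principle on $\Omega\cap B(0,R)$ with $R\to\infty$ gives
\[
\phi_0\!\left(\frac{x}{R_0}\right)\le\phi(x)\le\phi_0\!\left(\frac{x}{r_0}\right),\qquad x\in\Omega;
\]
when $N=2$ the same bound follows once one knows $-\log R_0\le v\le-\log r_0$ on $\Omega$, which is the maximum principle for the bounded exterior solution $v$ applied to its boundary data $-\log|x|\in[-\log R_0,-\log r_0]$ on $\partial\Omega$. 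Since the construction gives $\phi\ge0$ (directly for $N\ge3$; for $N=2$ because $\phi$ vanishes on $\partial\Omega$ and tends to $+\infty$), the strong minimum principle yields $\phi>0$ in $\Omega$; for $N\ge3$ the a priori bounds $0\le\phi\le1$ and the strong maximum principle give $0<\phi<1$. The squeeze yields $\phi/\phi_0\to1$, together with $r_0^{N-2}|x|^{2-N}\le1-\phi(x)\le R_0^{N-2}|x|^{2-N}$ when $N\ge3$ and boundedness of $\phi(x)-\log|x|$ when $N=2$ — exactly what the $R_\phi$-statements require.

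For the gradient bounds I would apply the interior estimate $|\nabla h(x)|\le\frac{C_N}{\rho}\sup_{B(x,\rho)}|h|$ for $h$ harmonic, with $\rho=|x|/2$ (so $B(x,|x|/2)\subset\Omega$ once $|x|>2R_0$). For $N\ge3$, $|\nabla\phi(x)|=|\nabla(1-\phi)(x)|\le\frac{C}{|x|}\sup_{B(x,|x|/2)}(1-\phi)\le C|x|^{1-N}$ by the bound just obtained. For $N=2$, write $\nabla\phi(x)=\frac{x}{|x|^2}+\nabla v(x)$; since $v$ is bounded harmonic with a limit $v_\infty$ at infinity, the exterior expansion gives $|v(x)-v_\infty|=O(|x|^{-1})$ and hence $|\nabla v(x)|=O(|x|^{-2})$, so $x\cdot\nabla\phi(x)=1+x\cdot\nabla v(x)\to1$, which gives $\tfrac12\le x\cdot\nabla\phi(x)\le2$ for $|x|$ large. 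For uniqueness, if $\phi_1,\phi_2$ are two admissible functions then $g:=\phi_1-\phi_2$ is harmonic in $\Omega$ and vanishes on $\partial\Omega$: when $N\ge3$, $g=o(\phi_0)=o(1)$ tends to $0$ at infinity and the minimum and maximum principles on $\Omega\cap B(0,R)$ (as $R\to\infty$) force $g\equiv0$; when $N=2$, $g=o(\log|x|)$ has no logarithmic term in its exterior expansion, hence is bounded, and a bounded harmonic function on a planar exterior domain is determined by its boundary values on $\partial\Omega$ (planar Brownian motion is recurrent, so there is no harmonic mass at infinity), so $g\equiv0$.

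The step I expect to require the most care is the planar case: both the solvability and the sharp two-sided bound for the correction $v$, and more generally the treatment of the point at infinity, because in $\R^2$ the function $\phi$ itself is unbounded, so the maximum-principle arguments must be carried out on the bounded quantities obtained after subtracting $\log|x|$, and one must exploit that — in contrast with $N\ge3$ — a bounded harmonic function in a planar exterior domain carries no mass at infinity, so its values are controlled by its boundary values on $\partial\Omega$ alone. The precise constants in the two-sided estimate then hinge on choosing the barriers $\phi_0(\cdot/R_0)$ and $\phi_0(\cdot/r_0)$ exactly as dictated by $\overline{B(0,r_0)}\subset\mathcal{O}\subset\overline{B(0,R_0)}$.
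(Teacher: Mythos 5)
Your proof is correct in substance, but it is worth noting that the paper does not prove this lemma at all: it simply cites \cite[Lemma 2.2]{Han2} for $N=1$, \cite[Lemma 3.2]{Sobajima2021} for $N=2$ and \cite[Lemma 2.11]{Fino1} for $N\geq 3$. What you have written is therefore a genuinely self-contained alternative: the equilibrium-potential construction $\phi=\lim_R\phi_R$ for $N\geq 3$, the decomposition $\phi=\log|x|+v$ with $v$ the bounded exterior harmonic correction for $N=2$, the radial barriers $\phi_0(\cdot/R_0)$ and $\phi_0(\cdot/r_0)$ dictated by $\overline{B(0,r_0)}\subset\mathcal{O}\subset\overline{B(0,R_0)}$, interior gradient estimates on balls $B(x,|x|/2)$, and uniqueness via Phragm\'en--Lindel\"of (for $N\geq3$) and the absence of harmonic mass at infinity (for $N=2$). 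You also correctly identify that the Dirichlet condition $\phi=0$ on $\pa\Omega$ from \eqref{intro:phi_equation} must be read into the uniqueness statement — without it one could add $\ep|x|^{2-N}$ when $N\geq3$ — and that the delicate case is $N=2$, where all maximum-principle arguments must be run on the bounded quantity $\phi-\log|x|$. This is exactly the content of the cited lemmas, assembled from classical potential theory; what it buys is independence from three external references at the cost of some length.

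One step needs repair. For $N\geq 3$ you derive the lower bound $\phi\geq\phi_0(\cdot/R_0)$ by asserting that $\phi-\phi_0(\cdot/R_0)$ ``tends to $0$ at infinity''; that presupposes $\phi(x)\to 1$ as $|x|\to\infty$, which is not yet known from the construction alone (a priori the decreasing limit of the $\phi_R$ could be smaller) and is in fact equivalent to the lower bound you are trying to prove. The fix is standard: run the comparison at the level of the approximants, i.e.\ on $\Omega\cap B(0,R)$ compare $\phi_R$ with $\big(R_0^{2-N}-|x|^{2-N}\big)/\big(R_0^{2-N}-R^{2-N}\big)$, which is $\leq 0$ on $\pa\Omega$ and equals $1$ on $\pa B(0,R)$; letting $R\to\infty$ gives $\phi\geq 1-R_0^{N-2}|x|^{2-N}$, and only then does the squeeze yield $\phi/\phi_0\to1$. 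The analogous point for $N=2$ — the two-sided bound $-\log R_0\leq v\leq-\log r_0$ for the bounded correction $v$ — you do justify correctly by invoking that a bounded harmonic function on a planar exterior domain is controlled by its values on $\pa\Omega$ alone; making that rigorous requires the barrier $\ep\log(|x|/r_0)$ on $\Omega\cap B(0,R)$, but the idea is right. With that single adjustment the argument is complete.
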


Consider the homogeneous initial boundary value problem of the heat equation in an exterior domain
 \begin{equation}\label{eq:linearheat}
\begin{cases}
\pa_t u-\Delta u =0 & \text{in}\ (0,\infty)\times \Omega,
\\
u=0 & \text{on}\ (0,\infty)\times \pa\Omega, 
\\
u(0,x)=  u_0(x)
&\text{on}\ \Omega
\end{cases}
\end{equation} 
with $u_0\in L^1(\Omega)\cap L^\infty(\Omega)$. 
The existence and uniqueness of solutions 
are verified via the Hille--Yosida theorem. 
Here we use the notation 
\[
u(t)=e^{t\Delta_\Omega}u_0=S(t)u_0.
\]
The positivity preserving property of $S(t)$ is a consequence of the comparison principle.

The following lemma describes that 
the harmonic functions constructed in \cite{DR2025JDE}
are actually the same as our framework when $N\geq 3$. 
\begin{lemma}\label{lem:unique-harmonic}
If $N\geq 3$, then $\lim\limits_{t\to \infty}\Big(S(t)\mathbbm{1}_{\Omega}(x)\Big)=\phi(x)$ for all $x\in \Omega$.
\end{lemma}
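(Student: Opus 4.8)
The plan is to use linearity of $S(t)$ together with the fact, provided by Lemma~\ref{lem:harmonic}\,\textbf{(iii)}, that for $N\geq3$ the harmonic function $\phi$ is \emph{bounded}, with $0<\phi<1$ in $\Omega$ and $\phi(x)\to1$ as $|x|\to\infty$. First I would record that, since $(t,x)\mapsto\phi(x)$ solves $\partial_t u-\Delta u=0$ in $(0,\infty)\times\Omega$, vanishes on $\partial\Omega$, equals $\phi$ at $t=0$, and is bounded, uniqueness of bounded solutions of \eqref{eq:linearheat} forces $S(t)\phi=\phi$ for all $t\geq0$. Decomposing $\mathbbm{1}_{\Omega}=\phi+g$ with $g:=\mathbbm{1}_{\Omega}-\phi=1-\phi\in L^\infty(\Omega)$ and invoking linearity of $S(t)$ on $L^\infty(\Omega)$, we obtain $S(t)\mathbbm{1}_{\Omega}=\phi+S(t)g$, so the claim reduces to showing that $S(t)g(x)\to0$ as $t\to\infty$ for each fixed $x\in\Omega$.

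The useful feature of $g$ is that it is bounded, nonnegative ($0\leq g<1$) and \emph{decays at spatial infinity}: $g(x)=1-\phi(x)\to0$ as $|x|\to\infty$. Given $\varepsilon>0$, I would pick $R>R_0$ with $g(y)<\varepsilon$ for all $y\in\Omega$ with $|y|\geq R$, so that $0\leq g\leq\mathbbm{1}_{\Omega\cap B(0,R)}+\varepsilon\,\mathbbm{1}_{\Omega}$ pointwise. Using that $S(t)$ is order preserving and that the Dirichlet heat kernel obeys $k_{\Omega}(t,x,y)\leq G(t,x-y)$ (domain monotonicity), together with $0\leq S(t)\mathbbm{1}_{\Omega}\leq1$, one gets
\[
0\leq S(t)g(x)\leq S(t)\mathbbm{1}_{\Omega\cap B(0,R)}(x)+\varepsilon\,S(t)\mathbbm{1}_{\Omega}(x)\leq (4\pi t)^{-N/2}\,\big|\Omega\cap B(0,R)\big|+\varepsilon .
\]
Letting $t\to\infty$ and then $\varepsilon\to0$ gives $S(t)g(x)\to0$, hence $\lim_{t\to\infty}S(t)\mathbbm{1}_{\Omega}(x)=\phi(x)$, as claimed.

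An alternative, equivalent route is by sandwiching: the lower bound $\phi(x)\leq S(t)\mathbbm{1}_{\Omega}(x)$ follows from the comparison principle, since the stationary $\phi$ solves \eqref{eq:linearheat} with $\phi\leq\mathbbm{1}_{\Omega}$ and $\phi|_{\partial\Omega}=0$; for the matching upper bound one writes $\mathbbm{1}_{\Omega}\leq\mathbbm{1}_{\Omega\cap B(0,R)}+(1-\varepsilon)^{-1}\phi$ for $R$ large and applies $S(t)$. The steps are all elementary; the points I would be most careful about are the two standard facts I quote---namely that $S(t)$ extends to a linear, order-preserving contraction on $L^\infty(\Omega)$ with $S(t)\phi=\phi$ (this rests on uniqueness of \emph{bounded} solutions of the exterior Dirichlet heat equation, a Phragm\'en--Lindel\"of-type statement), and the Gaussian domination $k_{\Omega}\leq G$, which yields $\|S(t)f\|_{L^\infty(\Omega)}\leq(4\pi t)^{-N/2}\|f\|_{L^1(\Omega)}$. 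Note that the hypothesis $N\geq3$ enters only through Lemma~\ref{lem:harmonic}\,\textbf{(iii)}: it is what makes $\phi$ bounded with limit $1$ at infinity, whereas for $N=1,2$ one instead has $S(t)\mathbbm{1}_{\Omega}\to0$.
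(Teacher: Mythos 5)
Your proof is correct, and it takes a genuinely different route from the paper's for the key step. The paper first sets $\phi_*(x)=\lim_{t\to\infty}S(t)\mathbbm{1}_{\Omega}(x)$ (the limit exists because $S(t)\mathbbm{1}_\Omega\leq \mathbbm{1}_\Omega$ forces $t\mapsto S(t)\mathbbm{1}_\Omega$ to be nonincreasing), observes that $\phi_*$ is a bounded harmonic function vanishing on $\partial\Omega$, gets $\phi\leq\phi_*$ exactly as in your ``alternative route,'' and then proves $\phi_*\leq\phi$ by the \emph{elliptic} maximum principle on the bounded region $\Omega\cap B(0,R_\ep)$, using $\phi-\phi_*=0$ on $\partial\Omega$ and $\phi-\phi_*\geq-\ep$ outside $B(0,R_\ep)$. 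You instead never pass to the limit function: writing $S(t)\mathbbm{1}_\Omega-\phi=S(t)(1-\phi)$ and splitting $1-\phi\leq \mathbbm{1}_{\Omega\cap B(0,R)}+\ep\,\mathbbm{1}_\Omega$, you kill the compactly supported piece with the Gaussian domination $k_\Omega\leq G$ (which is the paper's Lemma \ref{lem:LpLq-usual}) and the tail with $\ep$. Your argument is arguably more self-contained --- it simultaneously establishes existence of the limit, gives a quantitative rate $O(t^{-N/2})+\ep$, and avoids having to argue that $\phi_*$ is harmonic (which the paper does somewhat informally via $\pa_t\phi_*=0$); the paper's argument is purely elliptic once the limit is taken and needs no kernel bound. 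Both proofs rest on the same two standard inputs you flag, namely $S(t)\phi=\phi$ for the bounded stationary solution and order preservation of $S(t)$ on $L^\infty(\Omega)$, so the difference is one of mechanism rather than of underlying facts.
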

\begin{proof}
Put $\phi_*(x)=\lim\limits_{t\to \infty}S(t)\mathbbm{1}_{\Omega}(x)$ $(x\in\Omega)$
for simplicity. 
First, note that $S(t)\phi_*=\phi_*$. 
Therefore since $\pa_t\phi_*=0$, it follows that 
$\phi_*$ is harmonic on $\Omega$. 
On the one hand, 
since $S(t)\phi=\phi$
the comparison principle with $\phi\leq 1$ gives
\[
\phi(x)=S(t)\phi(x)\leq S(t)\mathbbm{1}_{\Omega}(x)
\]
and therefore letting $t\to \infty$, we have $\phi(x)\leq \phi_*(x)$ for all $x\in\Omega$.
For the opposite side inequality, we use $\lim\limits_{|x|\to \infty}\phi(x)=1$.
For every $\ep>0$, 
there exists $R_\ep>R_0$ such that 
\[
1-\ep \leq \phi(x) \leq \phi_*(x)\leq 1\quad \text{on}\ \R^N\setminus B(0,R_\ep).
\]
This shows that 
\[
\begin{cases}
\phi - \phi_*=0& \text{on}\ \pa\Omega
\\
\phi - \phi_* \geq 1-\ep - \phi_*\geq -\ep & \text{in}\ \R^N\setminus B(0,R_\ep).
\end{cases}
\]
The maximum principle (in $\Omega\cap B(0,R_\ep)$) provides 
\[
\phi -\phi_*\geq -\ep \quad\text{in}\ \Omega=(\Omega\cap B(0,R_\ep))\cup 
(\R^N\setminus B(0,R_\ep)).
\]
Since $\ep$ is arbitrary, we can find  $\phi -\phi_*\geq 0$. The proof is complete.
\end{proof}

The following lemma is the standard $L^p$-$L^q$ estimates for the Dirichlet 
heat semigroup with the comparison 
$S(t)=e^{t\Delta_\Omega}\leq e^{t\Delta_{\R^N}}$.

\begin{lemma}\label{lem:LpLq-usual}
Let $N\geq 1$. 
If $1\leq p\leq q\leq \infty$, 
there exists a positive constant $C$ 
(depending only on $N,p,q$) such that
for every $f\in L^p(\Omega)$, 
\begin{align}
\label{eq:LpLq-usual}
\|S(t)f\|_{L^q(\Omega)}
&\leq C t^{-\frac{N}{2}(\frac{1}{p}-\frac{1}{q})}\|f\|_{L^p(\Omega)},\quad t>0,
\\
\label{eq:LpLq-usual_Lap}
\|\Delta S(t)f\|_{L^q(\Omega)}
&\leq C t^{-\frac{N}{2}(\frac{1}{p}-\frac{1}{q})-1}\|f\|_{L^p(\Omega)},\quad t>0.
\end{align}
\end{lemma}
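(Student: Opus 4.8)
The plan is to derive both estimates from the corresponding estimates for the free heat semigroup on $\R^N$ together with the pointwise domination $0\leq S(t)f\leq e^{t\Delta_{\R^N}}\widetilde f$ valid for nonnegative $f$, where $\widetilde f$ denotes the extension of $f$ by zero to all of $\R^N$. First I would recall that this domination is exactly the comparison principle: $e^{t\Delta_{\R^N}}\widetilde f$ is a supersolution of \eqref{eq:linearheat} since it is nonnegative on $\pa\Omega$, agrees with $f$ at $t=0$ on $\Omega$, and solves the heat equation; hence $S(t)f\leq e^{t\Delta_{\R^N}}\widetilde f$ on $\Omega$ for nonnegative $f$, and combined with the analogous bound for $-f$ one gets $|S(t)f|\leq e^{t\Delta_{\R^N}}|\widetilde f|$ for general $f\in L^p(\Omega)\cap L^1(\Omega)$ (the intersection being dense, the estimate then extends to all of $L^p(\Omega)$ by approximation).

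For \eqref{eq:LpLq-usual}, I would then invoke the classical $L^p$–$L^q$ smoothing estimate for the Gaussian: from \eqref{intro:Gaussian}, Young's convolution inequality gives $\|e^{t\Delta_{\R^N}}g\|_{L^q(\R^N)}\leq \|G(t,\cdot)\|_{L^r(\R^N)}\|g\|_{L^p(\R^N)}$ with $\tfrac1q+1=\tfrac1r+\tfrac1p$, and an explicit computation of $\|G(t,\cdot)\|_{L^r(\R^N)}$ yields the factor $t^{-\frac{N}{2}(\frac1p-\frac1q)}$. Applying this to $g=|\widetilde f|$ and using $\||\widetilde f|\|_{L^p(\R^N)}=\|f\|_{L^p(\Omega)}$ and $\|S(t)f\|_{L^q(\Omega)}\leq \||S(t)f|\|_{L^q(\Omega)}\leq \|e^{t\Delta_{\R^N}}|\widetilde f|\,\|_{L^q(\R^N)}$ gives the first inequality with $C=C(N,p,q)$.

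For \eqref{eq:LpLq-usual_Lap}, the idea is the semigroup (analyticity) bound $\|\Delta S(t/2)\|_{L^q(\Omega)\to L^q(\Omega)}\leq C t^{-1}$, which holds because $\Delta_\Omega$ generates a bounded analytic semigroup on $L^q(\Omega)$ for $1<q<\infty$, with the endpoint cases $q=1,\infty$ handled by the explicit kernel bounds $|\pa_t k_\Omega(t,x,y)|\leq |\pa_t G(t,x-y)|$ (again by comparison) giving $\|\pa_t S(t)\|_{L^1\to L^1},\|\pa_t S(t)\|_{L^\infty\to L^\infty}\leq Ct^{-1}$. Writing $\Delta S(t)f=(\Delta S(t/2))\,S(t/2)f$, using $\Delta S = \pa_t S$, and composing with \eqref{eq:LpLq-usual} applied on $[0,t/2]$ produces the factor $t^{-\frac{N}{2}(\frac1p-\frac1q)-1}$, completing the proof. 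The main obstacle I anticipate is making the analyticity/kernel bound $\|\pa_t S(t)\|_{L^q\to L^q}\leq Ct^{-1}$ rigorous uniformly at the endpoints $q=1,\infty$ on a general exterior domain; the cleanest route is the Gaussian domination of the time derivative of the Dirichlet heat kernel, which sidesteps any use of domain-specific spectral theory.
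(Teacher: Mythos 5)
Your treatment of \eqref{eq:LpLq-usual} is exactly the paper's intended argument (the paper offers no written proof, only the remark that it follows from the comparison $S(t)\leq e^{t\Delta_{\R^N}}$ plus the standard Gaussian smoothing), and your reduction of \eqref{eq:LpLq-usual_Lap} to $\|\Delta S(t/2)\|_{L^q\to L^q}\leq Ct^{-1}$ composed with \eqref{eq:LpLq-usual} is the standard and correct route, which is also how the paper later uses the estimate (``property of analytic positive semigroup'' in the proof of Theorem \ref{thm;asymptotics}).

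However, your justification of the endpoint cases $q=1,\infty$ contains a genuine error: the claimed pointwise bound $|\pa_t k_\Omega(t,x,y)|\leq|\pa_t G(t,x-y)|$ does \emph{not} follow from the comparison principle and is in fact false. The maximum principle compares the kernels themselves ($0\leq k_\Omega\leq G$), but there is no comparison principle for time derivatives, which are not sign-definite; concretely, $\pa_t G(t,x-y)$ vanishes on the sphere $|x-y|^2=2Nt$, where $\pa_t k_\Omega(t,x,y)$ has no reason to vanish, so the asserted domination cannot hold pointwise. The correct way to get the endpoint bounds is via a Gaussian \emph{upper} bound for $\pa_t k_\Omega$, namely $|\pa_t k_\Omega(t,x,y)|\leq Ct^{-\frac{N}{2}-1}e^{-c|x-y|^2/t}$, which one obtains not by comparison but from the analyticity of the self-adjoint semigroup $e^{t\Delta_\Omega}$ on $L^2(\Omega)$ combined with the Gaussian bound $k_\Omega\leq G$ (Davies' complex-time interpolation argument, or equivalently the known fact that semigroups with Gaussian kernel bounds are analytic on $L^1$, the $L^\infty$ case then following by duality and self-adjointness). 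With that substitution your proof closes; as written, the step ``again by comparison'' would fail.
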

In order to describe an exceptional behavior of 
the (mainly two-dimensional) Dirichlet heat semigroups, 
we use the weighted $L^q$-spaces
\[
L_{\phi}^q(\Omega):=
L^q(\Omega,\phi(x)\,dx)=
\big\{v\in L^q_{\rm loc}(\Omega)\,;\,\,v\phi^{\frac{1}{q}}\in L^q(\Omega)\big\}, \quad q\in[1,\infty)
\]
endowed by the norm $\|v\|_{L^q_{\phi}(\Omega)}=\|v \phi^{\frac{1}{q}}\|_{L^q(\Omega)}$, 
where $\phi$ is given by Lemma \ref{lem:harmonic}.
Note that if $N\geq 3$, then we have $L^\infty(\Omega)\cap L_{\phi}^1(\Omega)=L^\infty(\Omega)\cap L^1(\Omega)$.

The following lemma for the two-dimensional case
is essentially given in \cite{GS2002}. 
For the alternative proof based on comparison principle, see \cite[Appendix]{IkedaSobajimaTaniguchiWakasugi2024}. 
\begin{lemma}\label{lem:LpLq_N=2}
Let $\Omega$ be an exterior domain in $\R^2$. If $1\leq p <\infty$ and $p\leq q\leq \infty$, 
there exists a positive constant $C$ 
(depending  only on $p$ and $q$) such that for every $f\in L^q(\Omega)\cap L_\phi^p(\Omega)$, 
\begin{equation}\label{eq:LpLq_N=2}
\|S(t)f\|_{L^q(\Omega)}\leq C (1+t)^{-(\frac{1}{p}-\frac{1}{q})}\big(1+\log(1+t)\big)^{-\frac{1}{p}}\left(\|f\|_{L^q(\Omega)}+\|f\|_{L^p_{\phi}(\Omega)}\right),\quad t\geq 0.
\end{equation}
\end{lemma}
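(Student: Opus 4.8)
The plan is to reduce \eqref{eq:LpLq_N=2} to the sharp Dirichlet heat-kernel bounds for the plane minus a compact set, due to Grigor'yan and Saloff-Coste \cite{GS2002}, and then to conclude by Young's inequality. Write $k_\Omega(t,x,y)$ for the integral kernel of $S(t)$. Since $S(t)$ is positivity preserving we have $|S(t)f|\le S(t)|f|$, while $\|\,|f|\,\|_{L^q(\Omega)}=\|f\|_{L^q(\Omega)}$ and $\|\,|f|\,\|_{L^p_\phi(\Omega)}=\|f\|_{L^p_\phi(\Omega)}$, so it suffices to prove the estimate for $f\ge 0$. We then split at $t=2$. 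For $0\le t\le 2$ the prefactors $(1+t)^{-(\frac1p-\frac1q)}$ and $(1+\log(1+t))^{-\frac1p}$ lie between two positive constants, so the right-hand side of \eqref{eq:LpLq_N=2} is comparable to $\|f\|_{L^q(\Omega)}+\|f\|_{L^p_\phi(\Omega)}$, hence bounded below by a fixed multiple of $\|f\|_{L^q(\Omega)}$; since $S(t)\le e^{t\Delta}$ is a contraction on every $L^q(\Omega)$ (cf.\ the comparison noted before Lemma \ref{lem:LpLq-usual}), we get $\|S(t)f\|_{L^q(\Omega)}\le\|f\|_{L^q(\Omega)}$, which settles the range $[0,2]$.

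For $t\ge 2$ we use the Grigor'yan--Saloff-Coste upper bound
\[
0\le k_\Omega(t,x,y)\le \frac{C}{t}\Big(\frac{\phi(x)}{\log t}\wedge 1\Big)\Big(\frac{\phi(y)}{\log t}\wedge 1\Big)e^{-c|x-y|^2/t},\qquad x,y\in\Omega,\ \ t\ge 2,
\]
with $\phi$ the harmonic function of Lemma \ref{lem:harmonic}; the factors $\tfrac{\phi(x)}{\log t}\wedge 1$ encode the Dirichlet condition near $\pa\Omega$ (where $\phi$ vanishes) and the recurrence of planar Brownian motion at spatial infinity, and they are precisely what produces the logarithmic gain in \eqref{eq:LpLq_N=2}. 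Keeping the $y$-factor, estimating the $x$-factor by $1$, and using the elementary inequality $a\wedge 1\le a^{1/p}$ (valid for $a\ge 0$, since $1/p\le 1$), we obtain the pointwise bound
\[
0\le [S(t)f](x)\le \frac{C}{t\,(\log t)^{1/p}}\int_\Omega \phi(y)^{1/p}e^{-c|x-y|^2/t}f(y)\,dy,\qquad x\in\Omega.
\]

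Extending $\phi^{1/p}f$ by zero to $\R^2$, the integral is the convolution of $\phi^{1/p}f\in L^p(\R^2)$ with the Gaussian $G_c(z)=e^{-c|z|^2/t}$, and $\|G_c\|_{L^r(\R^2)}=C\,t^{1/r}$. Applying Young's inequality with $\tfrac1r=1-(\tfrac1p-\tfrac1q)\in[0,1]$ gives
\[
\|S(t)f\|_{L^q(\Omega)}\le \frac{C}{t\,(\log t)^{1/p}}\,\|G_c\|_{L^r(\R^2)}\,\|\phi^{1/p}f\|_{L^p(\Omega)}=C\,t^{-(\frac1p-\frac1q)}(\log t)^{-1/p}\,\|f\|_{L^p_\phi(\Omega)}.
\]
Since $t^{-(\frac1p-\frac1q)}(\log t)^{-1/p}\asymp(1+t)^{-(\frac1p-\frac1q)}(1+\log(1+t))^{-1/p}$ for $t\ge 2$, this is the required bound on $[2,\infty)$, and together with the case $t\in[0,2]$ it yields \eqref{eq:LpLq_N=2} for all $t\ge 0$ (any discrepancy between the actual threshold in the GS estimate and $t=2$ being absorbed into the contraction estimate).

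The one non-elementary ingredient is the kernel bound invoked for $t\ge 2$, and this is where the real work of the lemma lies. If one prefers to avoid quoting \cite{GS2002}, the relevant upper bound can instead be produced by the comparison principle, along the lines of \cite[Appendix]{IkedaSobajimaTaniguchiWakasugi2024}: one verifies that a function built from $\dfrac{\min\{1+\phi(x),\log(e+t)\}}{\log(e+t)}$ times a shifted Gaussian is a supersolution of the Dirichlet heat equation in $\Omega$ dominating $k_\Omega(\cdot,\cdot,y)$, the delicate points being the behaviour near $\pa\Omega$ (where $\phi$ is bounded and the Hopf lemma pins down $\phi\asymp\mathrm{dist}(\cdot,\pa\Omega)$) and the matching with the standard short-time Gaussian estimate; once such a supersolution is in hand, the convolution argument of the previous paragraph (with $\phi^{1/p}$ replaced by $(1+\phi)^{1/p}$, and $\|f\|_{L^p(\Omega)}$ then re-absorbed into $\|f\|_{L^q(\Omega)}+\|f\|_{L^p_\phi(\Omega)}$ using that $\phi\ge 1$ outside a large ball) goes through unchanged.
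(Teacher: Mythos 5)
Your proposal is correct and rests on the same ingredient the paper relies on, namely the Grigor'yan--Saloff-Coste kernel bound of \cite{GS2002}; the paper gives no written proof beyond citing \cite{GS2002} and the comparison-principle argument in the appendix of \cite{IkedaSobajimaTaniguchiWakasugi2024}, whereas you actually carry out the deduction (bounding the $x$-factor by $1$, the $y$-factor by $(\phi(y)/\log t)^{1/p}$, and applying Young's inequality), and this deduction checks out, including the endpoint cases $q=\infty$ and $p=q$. The only organizational difference is that the paper's remark first reduces a general exterior domain to $\Omega'=\R^2\setminus\overline{B(0,r_0)}$ by domain monotonicity of the Dirichlet semigroup and then quotes the estimates there, while you invoke the kernel bound for general $\Omega$ directly, which \cite{GS2002} indeed covers.
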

\begin{remark}
Although 
the special case is only treated in  \cite{IkedaSobajimaTaniguchiWakasugi2024},
it is not difficult to generalize to the case of general exterior domains.
Indeed, since $\Omega \subset \Omega'=\R^2\setminus\overline{B(0,r_0)}$, 
the comparison principle shows that 
$\log(\frac{|x|}{R_0})\leq \phi(x)$ and 
$e^{t\Delta_\Omega}\leq e^{t\Delta_{\Omega'}}|_{\Omega}$ (the restriction to $\Omega$). 
Moreover, the $L^p$-$L^q$ type estimates in \cite{IkedaSobajimaTaniguchiWakasugi2024} 
is applicable to $\Omega'$. 
Combining the above facts, we can verify Lemma \ref{lem:LpLq_N=2} for arbitrary general exterior domains.
\end{remark}

The $L^p$-$L^q$ estimates for 
the one-dimensional case $\Omega=(0,\infty)$ 
are the following. 
\begin{lemma}\label{lem:LpLq_N=1}
Let $\Omega=(0,\infty)$. 
There exists a positive constant $C$ such that 
for every $1\leq q\leq \infty$ and $f\in L_{\phi}^1(\Omega)$, 
\begin{align}
\label{eq:LpLq_N=1_1-q}
\|S(t) f\|_{L^q(\Omega)}
&\leq C\,t^{\frac{1}{2q}-1}\|f\|_{L^1_{\phi}(\Omega)},\quad t>0.
\end{align}
\end{lemma}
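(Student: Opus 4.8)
The plan is to argue directly with the explicit kernel of the Dirichlet heat semigroup on $(0,\infty)$. Since here $\phi(x)=x$, we have $\|f\|_{L^1_\phi(\Omega)}=\int_0^\infty y\,|f(y)|\,dy$, and by the odd reflection principle $S(t)f(x)=\int_0^\infty k(t,x,y)f(y)\,dy$ with
\[
k(t,x,y)=(4\pi t)^{-1/2}\Big(e^{-(x-y)^2/(4t)}-e^{-(x+y)^2/(4t)}\Big),\qquad x,y>0,
\]
which is nonnegative and symmetric in $x,y$. This kernel is precisely the object encoding the classical correspondence between the one-dimensional Dirichlet heat equation on $(0,\infty)$ and radially symmetric solutions of the heat equation in $\R^3$ announced in the Introduction; it is, however, just as convenient to work with $k$ directly. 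I would reduce everything to the two kernel estimates
\[
|k(t,x,y)|\le C\,t^{-1}\min\{x,y\}\qquad\text{and}\qquad \int_0^\infty k(t,x,y)\,dx\le C\,t^{-1/2}\,y,
\]
valid for all $x,y,t>0$, and then combine them.

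The second estimate is obtained by explicit integration: the substitutions $s=x-y$ and $s=x+y$ give $\int_0^\infty\big(e^{-(x-y)^2/(4t)}-e^{-(x+y)^2/(4t)}\big)\,dx=2\int_0^y e^{-s^2/(4t)}\,ds\le 2y$. The first estimate is where the only real work lies: a crude bound of $k$ by the larger Gaussian would cost a factor $\sqrt t$, so one must exploit the cancellation between the two Gaussians. Writing
\[
e^{-(x-y)^2/(4t)}-e^{-(x+y)^2/(4t)}=\int_{x-y}^{x+y}\frac{s}{2t}\,e^{-s^2/(4t)}\,ds,
\]
and using that the integrand is odd, for $x\le y$ the symmetric part of the interval contributes nothing and only the subinterval $[y-x,\,x+y]$ (of length $2x$) remains; bounding it by $2x\cdot\sup_{s>0}\frac{s}{2t}e^{-s^2/(4t)}=2x\cdot e^{-1/2}(2t)^{-1/2}$ and multiplying by $(4\pi t)^{-1/2}$ yields $|k(t,x,y)|\le C t^{-1}x$. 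The case $x\ge y$ follows by symmetry of $k$, giving the claimed bound with $\min\{x,y\}$.

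To conclude, for $1\le q<\infty$ I would use $\|k(t,\cdot,y)\|_{L^q(0,\infty)}^q\le\|k(t,\cdot,y)\|_{L^\infty(0,\infty)}^{q-1}\,\|k(t,\cdot,y)\|_{L^1(0,\infty)}$ together with the two estimates above and $\min\{x,y\}\le y$, which gives $\|k(t,\cdot,y)\|_{L^q(0,\infty)}\le C\,t^{\frac{1}{2q}-1}\,y$ (the endpoint $q=\infty$ being the first estimate itself). Minkowski's integral inequality applied to $S(t)f(x)=\int_0^\infty k(t,x,y)f(y)\,dy$ then yields
\[
\|S(t)f\|_{L^q(0,\infty)}\le\int_0^\infty\|k(t,\cdot,y)\|_{L^q(0,\infty)}\,|f(y)|\,dy\le C\,t^{\frac{1}{2q}-1}\int_0^\infty y\,|f(y)|\,dy=C\,t^{\frac{1}{2q}-1}\|f\|_{L^1_\phi(\Omega)},
\]
which is exactly \eqref{eq:LpLq_N=1_1-q}.

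I expect the main obstacle to be the pointwise kernel bound: extracting the factor $\min\{x,y\}$ requires genuinely using the cancellation/oddness above rather than triangle-inequality estimates on the two Gaussians separately. Once that bound is in hand, the reduction via $\|k\|_q\le\|k\|_\infty^{1-1/q}\|k\|_1^{1/q}$ and Minkowski's inequality is routine, and the interpolation makes the full range $1\le q\le\infty$ come out at once with the correct exponent $\tfrac{1}{2q}-1$.
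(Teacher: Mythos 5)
Your argument is correct, and every step checks out: the reflected kernel $k(t,x,y)=(4\pi t)^{-1/2}\bigl(e^{-(x-y)^2/(4t)}-e^{-(x+y)^2/(4t)}\bigr)$ is indeed the Dirichlet kernel on $(0,\infty)$, the cancellation bound $|k(t,x,y)|\le Ct^{-1}\min\{x,y\}$ via the odd integrand $\frac{s}{2t}e^{-s^2/(4t)}$ is the right way to extract the extra factor of $t^{-1/2}$ together with the weight, the mass bound $\int_0^\infty k(t,x,y)\,dx\le Ct^{-1/2}y$ follows from the elementary substitution you indicate, and the combination $\|k(t,\cdot,y)\|_{L^q}\le\|k(t,\cdot,y)\|_{L^\infty}^{1-1/q}\|k(t,\cdot,y)\|_{L^1}^{1/q}\le Ct^{\frac{1}{2q}-1}y$ plus Minkowski's integral inequality gives the full range $1\le q\le\infty$ with a constant uniform in $q$. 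However, your route is genuinely different from the paper's. The paper (Remark \ref{rem:N=1toN=3radial}) avoids the explicit kernel entirely: it sets $v(t,x)=u(t,|x|)/|x|$ on $\R^3$, observes that $\|u(t)\|_{L^2(0,\infty)}=(4\pi)^{-1/2}\|v(t)\|_{L^2(\R^3)}$ and $\|v(0)\|_{L^1(\R^3)}=4\pi\|f\|_{L^1_\phi}$, imports the $L^1$--$L^2$ decay in $\R^3$ to get the case $q=2$, upgrades to $q=\infty$ via the one-dimensional $L^2$--$L^\infty$ smoothing, obtains $q=1$ by splitting $(0,t^{1/2})\cup(t^{1/2},\infty)$ and using the $L^1_\phi$-contraction, and interpolates. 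What the paper's approach buys is brevity and consistency with the $N=1\leftrightarrow N=3$ dictionary used again in Lemma \ref{lem:M_Omega}; what your approach buys is a single self-contained computation that produces all exponents $1\le q\le\infty$ at once from two kernel bounds, without invoking the $\R^3$ semigroup or a separate treatment of the endpoint $q=1$.
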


\begin{remark}\label{rem:N=1toN=3radial}
We can prove \eqref{eq:LpLq_N=1_1-q} with $q=2$ via 
the following translation. 
Define 
$v:(0,\infty)\times\R^3\to \R$ 
as $v(t,x)=\frac{u(t,r)}{r}$ $(r=|x|)$, where $u(t)=S(t)f$.
Then $v$ is a radially symmetric solution of 
the heat equation in $\R^3$ with $v(0,x)=\frac{f(|x|)}{|x|}$. 
Noting that 
\begin{align*}
\|u(t,\cdot)\|_{L^2(0,\infty)}
&=
\left(\int_0^\infty|u(t,r)|^2\,dr\right)^{\frac{1}{2}}
=
\left(\frac{1}{4\pi}\int_{\R^3}
|v(t,x)|^2\,dx\right)^{\frac{1}{2}}
=
\frac{1}{\sqrt{4\pi}}\|v(t,\cdot)\|_{L^2(\R^3)}.
\end{align*}
We see by the $L^p$-$L^q$ estimate 
in $\R^3$ that 
\begin{equation}\label{eq:1d_wL1-Linfty}
\|u(t,\cdot)\|_{L^2(0,\infty)}\leq 
\frac{C}{\sqrt{4\pi}}
t^{-\frac{3}{4}}\|v(0,\cdot)\|_{L^1(\R^3)}
=
C\sqrt{4\pi}
t^{-\frac{3}{4}}\|f\|_{L_{\phi}^1(0,\infty)}.
\end{equation}
The estimate \eqref{eq:LpLq_N=1_1-q} with $q=\infty$ 
is just a combination of \eqref{eq:1d_wL1-Linfty} 
and the usual one-dimensional $L^2$-$L^\infty$ estimate.
Then 
using \eqref{eq:LpLq_N=1_1-q} with $q=\infty$ 
and $L_{\phi}^1$-contractive property, 
we can also find the estimate \eqref{eq:LpLq_N=1_1-q} with $q=1$ as follows: 
\begin{align*}
\|u(t)\|_{L^1(0,\infty)}
&= \int_{0}^{t^{1/2}}|u(t,x)|\,dx
+ \int_{t^{1/2}}^\infty|u(t,x)|\,dx
\\
&\leq 
 \|u(t)\|_{L^\infty(0,t^{1/2})}\int_0^{t^{1/2}}\,dt
  + t^{-1/2}\int_{\sqrt{t}}^\infty x|u(t,x)|\,dx
\leq 
 C\|f\|_{L_\phi^1(0,\infty)}t^{-1/2}.
 \end{align*}
By interpolation, we have the desired estimates for all $q$.
\end{remark}

Summarizing the above decay estimates, we state 
the fundamental estimate which we will frequently use.
\begin{lemma}\label{lem:linear-Linfty-decay}
There exists a positive constant $C$ such that 
for every $1\leq q\leq \infty$
and 
$f\in L^q(\Omega)\cap L_\phi^1(\Omega)$, 
\[
\|S(t)f\|_{L^q(\Omega)}\leq 
\frac{C \Big(\|f\|_{L^q}+\|f\|_{L_{\phi}^1}\Big)}
{(1+t)^{\frac{N}{2}(1-\frac{1}{q})}\mathcal{E}_{N}(t)},\quad t\geq 0
\]
where 
\[
\mathcal{E}_{N}(t)=
\begin{cases}
(1+t)^{\frac{1}{2}}&\text{if}\ N=1, 
\\
1+\log (1+t)&\text{if}\ N=2, 
\\
1 &\text{if}\ N\geq 3. 
\end{cases}
\]
\end{lemma}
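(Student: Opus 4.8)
The plan is to derive Lemma \ref{lem:linear-Linfty-decay} simply by bookkeeping the three earlier $L^p$-$L^q$ estimates according to the dimension. The point is that in each case we already have everything we need, and only the normalization $\mathcal{E}_N(t)$ and the use of $(1+t)$ instead of $t$ must be arranged uniformly. First I would note the elementary fact that for any fixed exponents the map $t\mapsto t^{-a}$ and $t\mapsto (1+t)^{-a}$ are comparable on $t\ge 1$, while on $0<t\le 1$ the contraction properties of $S(t)$ on $L^q$ and on $L^1_\phi$ give uniform bounds; thus it suffices to prove the stated inequality for $t\ge 1$, where $(1+t)\sim t$ and $\mathcal{E}_N(t)\sim t^{1/2}$, $1+\log t$, or $1$ respectively.

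For $N\ge 3$ the argument is the shortest: by Lemma \ref{lem:LpLq-usual} with $p=1$ we get $\|S(t)f\|_{L^q}\le Ct^{-\frac N2(1-\frac1q)}\|f\|_{L^1}$, and since (as remarked after the definition of $L^q_\phi$) $L^\infty\cap L^1_\phi=L^\infty\cap L^1$ with $\phi<1$, we have $\|f\|_{L^1}\le \|f\|_{L^1_\phi}+\text{(something controlled on a fixed compact set)}$; more directly, $\|f\|_{L^1}\le \|f\|_{L^1_\phi}$ fails only near $\partial\Omega$ where $\phi$ vanishes, so one instead splits $\Omega$ into $\Omega\cap B(0,R_\phi)$ and its complement, bounds the first piece by $\|f\|_{L^q}$ times a constant (Hölder on a bounded set) and the second by $C\|f\|_{L^1_\phi}$ since $\phi\gtrsim 1$ there. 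That gives the claim with $\mathcal{E}_N\equiv 1$. For $N=2$, Lemma \ref{lem:LpLq_N=2} with $p=1$ says exactly $\|S(t)f\|_{L^q}\le C(1+t)^{-(1-\frac1q)}(1+\log(1+t))^{-1}(\|f\|_{L^q}+\|f\|_{L^1_\phi})$, which is the assertion verbatim. For $N=1$, Lemma \ref{lem:LpLq_N=1} gives $\|S(t)f\|_{L^q}\le Ct^{\frac1{2q}-1}\|f\|_{L^1_\phi}$; since $\tfrac1{2q}-1=-\tfrac12(1-\tfrac1q)-\tfrac12$, this is precisely $\|f\|_{L^1_\phi}$ divided by $t^{\frac12(1-\frac1q)}\cdot t^{1/2}$, i.e.\ the stated bound with $\mathcal{E}_1(t)=(1+t)^{1/2}$ after passing from $t$ to $1+t$ on $t\ge 1$.

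The only genuinely fiddly point — and the main obstacle, such as it is — is the small-time regime $0<t<1$, together with making the statement hold with the single combined norm $\|f\|_{L^q}+\|f\|_{L^1_\phi}$ in every dimension. For small $t$ one uses that $S(t)$ is a contraction on $L^q(\Omega)$ (since $e^{t\Delta_\Omega}\le e^{t\Delta_{\R^N}}$ and the Gaussian semigroup is $L^q$-contractive), hence $\|S(t)f\|_{L^q}\le \|f\|_{L^q}$, while both $(1+t)^{\frac N2(1-\frac1q)}$ and $\mathcal{E}_N(t)$ stay between $1$ and a dimensional constant on $[0,1]$; so the inequality holds trivially there. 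Assembling: for $t\ge 1$ invoke the appropriate one of the three lemmas and absorb the $t$-versus-$1+t$ discrepancy into the constant; for $0\le t\le 1$ use $L^q$-contractivity. I would then remark that in the $N\ge 3$ and $N=1$ cases the right-hand side norm could be taken to be just $\|f\|_{L^1_\phi}$ (for $N\ge 3$ after the compact-set splitting above, absorbing $\|f\|_{L^q}$ on the bounded piece into $\|f\|_{L^1_\phi}$ is \emph{not} possible, so the combined norm is the honest uniform statement), which is why the lemma is phrased with the sum of norms — giving one clean estimate valid for all $N\ge 1$.
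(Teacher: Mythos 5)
Your proposal is correct and matches what the paper intends: Lemma \ref{lem:linear-Linfty-decay} is stated there without proof as a ``summary'' of Lemmas \ref{lem:LpLq-usual}, \ref{lem:LpLq_N=2} and \ref{lem:LpLq_N=1}, and your case-by-case assembly (with the small-time contraction argument and the compact-set splitting reconciling $\|f\|_{L^1}$ with $\|f\|_{L^q}+\|f\|_{L^1_\phi}$ when $N\geq 3$) is exactly the intended verification, spelled out in more detail than the paper bothers to.
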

\begin{remark}
To clarify the difference between 
the result in the exterior domain 
and that in the whole space $\R^N$, 
we have introduced the notation $\mathcal{E}_{N}(t)$ 
which describes an additional decay. 
\end{remark}

\subsection{Some basic inequalities}

The following two lemmas 
are the summary of elementary 
computations which will be used
in the discussion. 
\begin{lemma}\label{lem:integral_0-t}
Let $r,m\in\R$. 
For every $t>0$, one has 
\[
\int_0^t(1+s)^{r}\big(1+\log(1+s)\big)^{m}\,ds
\begin{cases}
\leq 
C_{m,r}(1+t)^{r+1}\big(1+\log(1+t)\big)^{m}
& \text{if}\ r>-1, m\in\R, 
\\[5pt]
\leq C_{m,r}\big(1+\log(1+t)\big)^{m+1}&\text{if}\ r=-1,\,m>-1,
\\[5pt]
= \log\big(1+\log(1+t)\big)
&\text{if}\ r=-1, m=-1,
\\[5pt]
\leq C_{m,r}
&\text{otherwise},
\end{cases}
\]
where $C_{m,r}$ is a positive constant depending only on $m$ and $r$.
\end{lemma}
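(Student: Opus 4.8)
The plan is to prove Lemma \ref{lem:integral_0-t} by a direct case analysis on the sign of $r+1$, handling the three sub-cases $r>-1$, $r=-1$, and $r<-1$ separately, and within each case tracking the effect of the logarithmic factor $\big(1+\log(1+s)\big)^{m}$. The guiding principle throughout is the substitution $\tau=\log(1+s)$, which converts the integrand into $e^{(r+1)\tau}(1+\tau)^{m}$ and reduces everything to elementary facts about $\int_0^{T}e^{a\tau}(1+\tau)^{m}\,d\tau$ with $T=\log(1+t)$.

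First I would treat the case $r>-1$ (so $a:=r+1>0$). Here the dominant contribution comes from the endpoint $\tau=T$, and I would argue that $\int_0^{T}e^{a\tau}(1+\tau)^{m}\,d\tau \le C_{m,r}\, e^{aT}(1+T)^{m}$. For $m\ge 0$ this follows by monotonicity of $(1+\tau)^m$ (pull it out at $\tau=T$) together with $\int_0^T e^{a\tau}\,d\tau\le a^{-1}e^{aT}$; for $m<0$ one splits $[0,T]$ at $T/2$, bounding the integral over $[0,T/2]$ by a constant (since $e^{a\tau}\le e^{aT/2}$ and $(1+\tau)^m\le 1$, which is $o(e^{aT}(1+T)^m)$ as $T\to\infty$ and bounded for bounded $T$) and the integral over $[T/2,T]$ by pulling out $(1+T/2)^m\sim 2^{-m}(1+T)^m$ up to constants. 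Translating back via $e^{aT}=(1+t)^{r+1}$ and $1+T=1+\log(1+t)$ gives exactly the stated first bound. I would note that a small amount of care is needed for $t$ in a bounded range (say $t\le 1$), where all quantities are comparable to absolute constants, so the inequality is trivially true after adjusting $C_{m,r}$.

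Next, for $r=-1$ we have $a=0$, and the integral becomes exactly $\int_0^{T}(1+\tau)^{m}\,d\tau$. If $m>-1$ this equals $\frac{1}{m+1}\big((1+T)^{m+1}-1\big)\le C_m (1+T)^{m+1}$, which is the second bound. If $m=-1$ it equals $\log(1+T)=\log\big(1+\log(1+t)\big)$ exactly, the third bound (as an identity). If $m<-1$ the integral converges as $T\to\infty$, giving a finite constant bound, which feeds into the ``otherwise'' case. Finally, for $r<-1$ we have $a<0$; then $e^{a\tau}(1+\tau)^m$ is integrable on $[0,\infty)$ regardless of $m$ (exponential decay beats any polynomial), so $\int_0^T \le \int_0^\infty = C_{m,r}<\infty$, which together with the $m<-1,\,r=-1$ sub-case completes the ``otherwise'' bound.

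This lemma is essentially a bookkeeping exercise, so I do not expect a genuine obstacle; the only mildly delicate point is making the constants uniform down to $t=0$ and cleanly separating the $T\to\infty$ asymptotics from the behavior on a compact $t$-interval, which is handled by the split-at-$T/2$ device and the remark that on bounded $t$-ranges every term is pinched between positive constants. I would present the $r>-1$, $m<0$ estimate carefully (it is the one case where the naive ``pull out at the endpoint'' does not immediately work) and state the remaining cases concisely, since they are either exact integrations or convergence-of-an-improper-integral arguments.
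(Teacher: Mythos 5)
Your proof is correct and complete: the substitution $\tau=\log(1+s)$ reducing the integral to $\int_0^{T}e^{(r+1)\tau}(1+\tau)^{m}\,d\tau$ with $T=\log(1+t)$, the split at $T/2$ to handle $r>-1$ with $m<0$, and the exact integrations for $r=-1$ together cover all four cases, including the uniformity of constants down to $t=0$. The paper states this lemma without proof, treating it as an elementary computation, so there is no authorial argument to compare against; your write-up is exactly the standard verification one would supply.
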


\begin{lemma}\label{lem:integral_t-infty}
Let $r,m\in\mathbb{R}$. For every $t>0$, 
one has 
\[
\int_t^\infty(1+s)^{r}(1+\log(1+s))^{m}\,ds
\begin{cases}
\leq C_{m,r}(1+t)^{r+1}(1+\log(1+t))^{m}
&\text{if}\ r<-1, m\in\R,
\\[5pt]
=C_{m,r}\big(1+\log(1+t)\big)^{m+1}
&\text{if}\ r=-1, m<-1,
\\[5pt]
=\infty
&\text{otherwise},
\end{cases}
\]
where $C_{m,r}$ is a positive constant depending only on $m$ and $r$.
\end{lemma}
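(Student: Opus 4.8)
The plan is to reduce everything to one substitution and then split into the three regimes. Setting $\tau=\log(1+s)$ gives $ds=(1+s)\,d\tau=e^{\tau}\,d\tau$, and writing $L=\log(1+t)$ one obtains the identity
\[
\int_t^\infty(1+s)^{r}\big(1+\log(1+s)\big)^{m}\,ds=\int_L^\infty e^{(r+1)\tau}(1+\tau)^m\,d\tau .
\]
This already isolates the mechanism: the exponential factor $e^{(r+1)\tau}$ decays as $\tau\to\infty$ precisely when $r<-1$, the integrand is a pure power of $1+\tau$ precisely when $r=-1$, and it grows when $r>-1$. So the case distinction in the statement is exactly the trichotomy $r<-1$, $r=-1$, $r>-1$, with the borderline line $r=-1$ further split by the sign of $m+1$.

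For the case $r<-1$ I would translate once more by $\tau=L+\sigma$, pulling out $e^{(r+1)L}=(1+t)^{r+1}$, so that it remains to bound $\int_0^\infty e^{(r+1)\sigma}(1+L+\sigma)^m\,d\sigma$ by a constant multiple of $(1+L)^m$. When $m\ge 0$ one uses $1+L+\sigma\le(1+L)(1+\sigma)$ to get $(1+L+\sigma)^m\le(1+L)^m(1+\sigma)^m$; when $m<0$ one simply uses $(1+L+\sigma)^m\le(1+L)^m$. In either case the residual integral $\int_0^\infty e^{(r+1)\sigma}(1+\sigma)^{\max\{m,0\}}\,d\sigma$ is a finite number $C_{m,r}$ since $r+1<0$, and undoing the substitution via $(1+L)^m=(1+\log(1+t))^m$ yields the asserted bound.

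For the case $r=-1$, $m<-1$, the identity collapses to $\int_L^\infty(1+\tau)^m\,d\tau=\frac{(1+L)^{m+1}}{-(m+1)}$, which is an exact equality with $C_{m,r}=\frac{1}{|m+1|}$. In the remaining ``otherwise'' regime one has either $r>-1$, in which case $e^{(r+1)\tau}(1+\tau)^m\to\infty$ and the integral diverges, or $r=-1$ with $m\ge -1$, in which case $\int_L^\infty(1+\tau)^m\,d\tau=\infty$ because $m+1\ge 0$; either way the integral equals $+\infty$. There is no real obstacle here: the only care needed is bookkeeping of the sign conditions on $r+1$ and $m+1$ so that the three displayed cases are matched exactly, and the harmless split on the sign of $m$ inside the first case.
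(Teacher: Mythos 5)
Your proof is correct. The paper states this lemma without proof (it is presented as a summary of elementary computations), and your substitution $\tau=\log(1+s)$ followed by the shift $\tau=L+\sigma$, the split on the sign of $m$ via $1+L+\sigma\le(1+L)(1+\sigma)$, and the exact evaluation on the line $r=-1$ is precisely the standard calculation the authors have in mind; all cases are matched correctly.
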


\section{The non-vanishing case of $M_{\Omega}(u(t))$}
\label{sec:non-vanishing}

To begin with, 
we state 
the fundamental property of the quantity 
\[
M_{\Omega}(u(t))=\int_{\Omega}u(t,x)\phi(x)\,dx
\]
for the solution $u$ of \eqref{intro:eq1}.
Since the finiteness of the above quantity 
is not trivial when $\phi$ is not bounded, we 
would give a short proof based on the monotone convergence theorem. 

\begin{lemma}\label{lem:M_Omega}
If $u_0\in L^\infty(\Omega)\cap L_{\phi}^1(\Omega)$ is nonnegative, then 
the corresponding global-in-time solution $u$ of \eqref{intro:eq1} satisfies
for every $t>0$, 
\begin{equation}\label{eq:M_Omega}
\int_{\Omega}u(t,x) \phi(x)\,dx
+
\int_0^t
\int_{\Omega}u(s,x)^p \phi(x)\,dx
\,ds
=
\int_{\Omega}u_0(x)\phi(x)\,dx.
\end{equation}

\end{lemma}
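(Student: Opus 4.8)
The plan is to justify the mass identity \eqref{eq:M_Omega} by multiplying the PDE $\pa_tu-\Delta u=-u^p$ by $\phi$, integrating over a bounded truncation of $\Omega$, and then carefully passing to the limit. Since $\phi$ may be unbounded (when $N=1,2$), the integrals $\int_\Omega u\phi\,dx$ and $\int_\Omega u^p\phi\,dx$ need not be finite a priori, so the argument has to produce the identity together with the finiteness, which is exactly why a monotone-convergence approach is natural. Concretely, for $R>R_0$ let $\Omega_R=\Omega\cap B(0,R)$, and integrate $(\pa_tu-\Delta u+u^p)\phi=0$ over $(0,t)\times\Omega_R$. Using $\Delta\phi=0$ in $\Omega$ and Green's identity,
\[
\int_{\Omega_R}(\Delta u)\phi\,dx
=\int_{\pa\Omega_R}\Big(\phi\,\pa_{\boldsymbol\nu}u-u\,\pa_{\boldsymbol\nu}\phi\Big)\,dS,
\]
and on the inner boundary $\pa\Omega$ both $u=0$ and $\phi=0$, so only the outer sphere $|x|=R$ contributes. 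This gives, for each $R$,
\[
\int_{\Omega_R}u(t)\phi\,dx+\int_0^t\!\!\int_{\Omega_R}u^p\phi\,dx\,ds
=\int_{\Omega_R}u_0\phi\,dx+\int_0^t\!\!\int_{|x|=R}\Big(\phi\,\pa_{\boldsymbol\nu}u-u\,\pa_{\boldsymbol\nu}\phi\Big)\,dS\,ds.
\]

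Next I would pass $R\to\infty$. The two left-hand integrals increase to $\int_\Omega u(t)\phi\,dx$ and $\int_0^t\!\int_\Omega u^p\phi\,dx\,ds$ by monotone convergence (the integrands are nonnegative since $u\ge0$ and $\phi\ge0$), and $\int_{\Omega_R}u_0\phi\,dx\to\int_\Omega u_0\phi\,dx=M_\Omega(u_0)<\infty$ likewise. So the whole matter reduces to showing the boundary remainder $\int_0^t\!\int_{|x|=R}(\phi\,\pa_{\boldsymbol\nu}u-u\,\pa_{\boldsymbol\nu}\phi)\,dS\,ds$ tends to $0$ along a suitable sequence $R=R_k\to\infty$. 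For the linear part this is clear from the mass-conservation identity for $e^{t\Delta_\Omega}$ stated in the introduction (indeed $S(t)$ preserves $\int_\Omega\cdot\,\phi\,dx$), but to keep the argument self-contained I would instead bound $u$ and its gradient directly: by the comparison principle $0\le u\le e^{t\Delta_\Omega}u_0$, and by Lemma \ref{lem:linear-Linfty-decay} together with interior parabolic estimates one controls $u$ and $\nabla u$ on the shell $\{|x|\sim R\}$ by quantities that decay in $R$. Combined with the growth rates $\phi(x)\sim\phi_0(x)$ and $|\nabla\phi(x)|$ from Lemma \ref{lem:harmonic} (so $\phi(x)\le\phi_0(x/r_0)$ and, in the worst case $N=2$, $|x\cdot\nabla\phi|\le2$), the surface integrals over $|x|=R$ are seen to vanish along a sequence $R_k\to\infty$; a Fubini/$L^1$-in-$R$ argument (the integrand is integrable in $R$ over $(R_0,\infty)$ after multiplying by a suitable power) guarantees such a sequence exists even if the limit is not monotone.

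The main obstacle is precisely this boundary-term control: one must show the flux of $u$ through large spheres, weighted against the possibly unbounded $\phi$, is negligible. The delicate dimension is $N=2$, where $\phi\sim\log|x|$ grows and the linear decay in Lemma \ref{lem:linear-Linfty-decay} carries only an extra logarithmic factor, so the pointwise bound on $u$ near $|x|=R$ must be paired with the weighted $L^1_\phi$ estimate \eqref{eq:LpLq_N=2} rather than a crude sup bound. A clean way to finesse this is: since $u_0\in L^\infty\cap L^1_\phi$, Lemma \ref{lem:LpLq_N=2} (resp. Lemma \ref{lem:LpLq_N=1}, resp. Lemma \ref{lem:LpLq-usual} for $N\ge3$) gives $e^{t\Delta_\Omega}u_0\in L^1_\phi(\Omega)$ for each $t$ with a bound uniform on compact time intervals, whence $\int_{\Omega\setminus B(0,R)}u(t)\phi\,dx\le\int_{\Omega\setminus B(0,R)}e^{t\Delta_\Omega}u_0\,\phi\,dx\to0$ as $R\to\infty$; differentiating the truncated identity in $R$ and using this tail smallness, together with the analogous tail control of $\nabla u$ from parabolic regularity applied to $u$ regarded as a solution of a linear heat equation with bounded coefficient $-u^{p-1}$, forces the surface remainder to $0$. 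Once the boundary term is dispatched, \eqref{eq:M_Omega} follows immediately, and as a by-product $M_\Omega(u(t))<\infty$ and $\int_0^t\!\int_\Omega u^p\phi<\infty$ for all $t$, so $t\mapsto M_\Omega(u(t))$ is nonincreasing and its limit exists — the setup needed for Theorem \ref{thm:main}.
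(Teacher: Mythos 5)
Your strategy---truncate the \emph{domain} to $\Omega_R=\Omega\cap B(0,R)$, apply Green's identity, and kill the resulting flux terms on $\{|x|=R\}$---differs from the paper's, and the difference is exactly where the gap sits. The term $\int_0^t\int_{|x|=R}\phi\,\pa_{\boldsymbol\nu}u\,dS\,ds$ is not dispatched by the tools you invoke. Take the delicate case $N=2$: the surface measure of $\{|x|=R\}$ is $\sim R$ and $\phi\sim\log R$, so you need $|\nabla u|=o\big((R\log R)^{-1}\big)$ on that sphere (or an integrable-in-$R$ substitute). But the comparison principle plus the tail bound $\|u_0\mathbbm{1}_{\{|y|>R/2\}}\|_{L^1}\lesssim(\log R)^{-1}\|u_0\|_{L^1_\phi}$ only gives $\sup_{|x|\sim R}u(s,x)\lesssim(\log R)^{-1}$, and interior parabolic estimates then yield $|\nabla u|\lesssim (\log R)^{-1}$ on the shell --- which, multiplied by $R\log R$, diverges. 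Your fallback, ``the integrand is integrable in $R$ after multiplying by a suitable power,'' amounts to asserting $\int_\Omega|\nabla u(s,\cdot)|\,\phi\,dx<\infty$ with control uniform (or integrable) in $s\in(0,t)$; this weighted $L^1$ gradient bound is neither proved in your argument nor available from Lemmas \ref{lem:harmonic}--\ref{lem:linear-Linfty-decay}, and it is not a routine consequence of $u_0\in L^\infty\cap L^1_\phi$. (The $u\,\pa_{\boldsymbol\nu}\phi$ term, by contrast, is fine: $|\nabla\phi|\sim\phi/(R\log R)$ and the co-area/$L^1$-in-$R$ argument applied to $\int_0^t\|u(s)\|_{L^1_\phi}ds$ does produce a good sequence $R_k$.)

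The paper sidesteps the flux term entirely by truncating the \emph{weight} rather than the domain: it multiplies the equation by $\phi_n(x)=\eta(\tfrac{\log|x|}{n})\phi(x)$, which is smooth and compactly supported, so integration by parts produces no boundary term at all --- only the volume error $\int_0^t\int_\Omega u\,\Delta\phi_n\,dx\,ds$, and since $\|\Delta\phi_n\|_{L^\infty}\lesssim n^{-1}$ (using $x\cdot\nabla\phi$ bounded and $\phi/|x|^2$ bounded from Lemma \ref{lem:harmonic}) while $\|u(s)\|_{L^1}\le\|u_0\|_{L^1}$, this error vanishes; monotone convergence in $n$ then gives \eqref{eq:M_Omega} together with the finiteness you also need. (For $N\ge3$ the paper argues directly since $0\le\phi\le1$, and for $N=1$ it uses the radial $3$D transplant of Remark \ref{rem:N=1toN=3radial}.) If you want to keep your domain-truncation scheme, you must either prove the weighted gradient bound $\int_\Omega|\nabla u|\,\phi\,dx<\infty$ with suitable $s$-integrability, or switch to the smooth cutoff of $\phi$, which avoids the issue.
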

\begin{proof}
If $N\geq 3$, then 
it follows from the fact $0\leq \phi\leq1$ that 
$u \phi \in C^1([0,\infty);L^1(\Omega))$ and 
\begin{align*}
\frac{d}{dt}\int_\Omega u(t,x)\phi(x)\,dx
=
\int_\Omega (\Delta u(t,x)-u(t,x)^p)\phi(x)\,dx
=
-\int_\Omega u(t,x)^p\phi(x)\,dx.
\end{align*}
If $N=1$, then as in Remark \ref{rem:N=1toN=3radial} 
we can see that the radially symmetric solution 
$v(t,x)=r^{-1}u(t,r)$ $(r=|x|, x\in \R^3)$ 
of $\pa_tv-\Delta v= u(t,|x|)^{p-1}v$ satisfies $v\in C^1([0,\infty);L^1(\R^3))$ 
and 
\begin{align*}
\frac{d}{dt}\int_0^\infty u(t,r)r\,dr
&=
\frac{1}{4\pi}\frac{d}{dt}\int_{\R^3} v(t,x)\,dx
\\
&=\frac{1}{4\pi}\int_{\R^3}(\Delta v(t,x)-u(t,|x|)^{p-1}v(t,x))\,dx
\\
&=-\frac{1}{4\pi}\int_{\R^3} u(t,|x|)^{p-1}v(t,x)\,dx
\\
&=-\int_0^\infty u(t,r)^{p}r\,dr.
\end{align*}
The case $N=2$ is delicate because of the lack of boundedness of $\phi$
(even if the equation is linear).
Fix $\eta\in C^\infty(\R)$ satisfying 
$\mathbbm{1}_{(-\infty,0]}\leq \eta \leq \mathbbm{1}_{(-\infty,1]}$
and decreasing in $(0,1)$.
Then we use 
the following compactly supported truncation
$\phi_n(x)=\eta(\frac{\log |x|}{n})\phi(x)$.
By the equation in \eqref{intro:eq1}, we have
\begin{align*}
\frac{d}{dt}\int_{\Omega}u \phi_n\,dx
=
\int_{\Omega}\pa_tu \phi_n\,dx
=
\int_{\Omega}(\Delta u- u^p) \phi_n\,dx
=
\int_{\Omega}u \Delta \phi_n\,dx
-\int_{\Omega}u^p \phi_n\,dx
\end{align*}
which implies
\begin{align*}
&\int_{\Omega}u(t,x) \phi_n(x)\,dx
+
\int_0^t
\int_{\Omega}u(s,x)^p \phi_n(x)\,dx
\,ds
\\
&\quad =
\int_{\Omega}u_0(x)\phi_n(x)\,dx
+
\int_0^t
\int_{\Omega}u(s,x) \Delta \phi_n(x)\,dx
\,ds.
\end{align*}
Noting that the family $\{\phi_n\}_{n=1}^\infty$ 
is a non-decreasing sequence converging to $\phi$ pointwisely and 
satisfies
\begin{align*}
\Delta \phi_n(x)
&=\frac{2}{n}\eta'\left(\frac{\log |x|}{n}\right)\frac{x\cdot\nabla \phi(x)}{|x|^2}
+
\frac{1}{n^2}\eta''\left(\frac{\log |x|}{n}\right)\frac{\phi(x)}{|x|^2}
\end{align*}
which converges uniformly to $0$ in $(0,\infty)\times \Omega$ as $n\to \infty$, 
by the monotone convergence theorem
we obtain the desired equality.  
\end{proof}

The proof for the non-vanishing case is analogous to 
the global existence for the Fujita equation $\pa_tu-\Delta u=u^p$ 
via the comparison principle (see e.g., Quittner--Souplet \cite[Theorem 20.2]{QS}).
\begin{theorem}\label{thm:nonvanishing}
Assume that  $u_0\in  L^1_{\phi}(\Omega) \cap L^\infty(\Omega)$
be nontrivial and nonnegative. 
If $p>\min\{2,1+\frac{2}{N}\}$, 
then the corresponding global-in-time solution of \eqref{intro:eq1}
satisfies 
$\lim\limits_{t\to\infty}M_{\Omega}(u(t))>0$.
\end{theorem}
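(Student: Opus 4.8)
The plan is to construct a subsolution of \eqref{intro:eq1} of the form $\underline{u}(t,x)=h(t)\,e^{t\Delta_\Omega}u_0(x)$ with a suitable decreasing positive function $h$ bounded below by a positive constant, and then to use the comparison principle to deduce $u\ge \underline{u}$, whence $M_\Omega(u(t))\ge h(t)M_\Omega(e^{t\Delta_\Omega}u_0)=h(t)M_\Omega(u_0)\ge \big(\inf h\big)M_\Omega(u_0)>0$. The key observation is that $e^{t\Delta_\Omega}u_0$ preserves the mass with respect to $\phi\,dx$, so only a uniform lower bound on $h$ is needed. First I would write $w(t,x)=e^{t\Delta_\Omega}u_0(x)$ and note $\pa_t w-\Delta w=0$, so that $\underline u=hw$ satisfies $\pa_t\underline u-\Delta\underline u+\underline u^{\,p}=h'w+h^p w^p$; thus $\underline u$ is a subsolution precisely when $h'(t)+h(t)^p\,w(t,x)^{p-1}\le 0$ for all $x$, for which it suffices to have $h'(t)+h(t)^p\,\|w(t,\cdot)\|_{L^\infty(\Omega)}^{p-1}\le 0$.

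The next step is to bound $\|w(t,\cdot)\|_{L^\infty(\Omega)}^{p-1}$ using the fundamental decay estimate of Lemma \ref{lem:linear-Linfty-decay} with $q=\infty$: since $u_0\in L^\infty(\Omega)\cap L^1_\phi(\Omega)$, we have
\[
\|w(t,\cdot)\|_{L^\infty(\Omega)}\le \frac{C\big(\|u_0\|_{L^\infty}+\|u_0\|_{L^1_\phi}\big)}{(1+t)^{N/2}\,\mathcal E_N(t)}=:g(t).
\]
Hence it suffices to solve the scalar ODE $h'(t)=-g(t)^{p-1}h(t)^p$ with $h(0)=1$, whose solution is $h(t)=\big(1+(p-1)\int_0^t g(s)^{p-1}\,ds\big)^{-1/(p-1)}$. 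This $h$ is positive, decreasing, and bounded below by a positive constant if and only if $\int_0^\infty g(s)^{p-1}\,ds<\infty$. Computing the exponent: $g(s)^{p-1}$ behaves like $(1+s)^{-\frac{N}{2}(p-1)}$ up to the factor $\mathcal E_N(s)^{-(p-1)}$, which in dimension $N=2$ contributes an extra $(1+\log(1+s))^{-(p-1)}$ and in $N=1$ an extra $(1+s)^{-(p-1)/2}$. By Lemma \ref{lem:integral_t-infty} (or directly), the integral $\int_0^\infty (1+s)^{-\frac{N}{2}(p-1)}\mathcal E_N(s)^{-(p-1)}\,ds$ converges exactly when $p>1+\frac2N$ for $N\ge3$; when $N=2$ it converges when $p>2$, or when $p=2$ because of the logarithmic gain, hence for $p>\min\{2,1+\frac2N\}=p>1$ restricted to $p>2$... more precisely for $N=2$ the condition $\frac{N}{2}(p-1)=p-1>1$ i.e.\ $p>2$ matches $\min\{2,1+\frac2N\}=2$; for $N=1$ one needs $\frac12(p-1)+\frac12(p-1)=p-1>1$, i.e.\ $p>2$, again matching $\min\{2,3\}=2$. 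Thus in every case the hypothesis $p>\min\{2,1+\frac2N\}$ is exactly what guarantees $\inf_{t\ge0}h(t)>0$.

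To finish, I would verify the comparison argument carefully: $\underline u(0,x)=u_0(x)=u(0,x)$, both satisfy the homogeneous Dirichlet condition on $\pa\Omega$ (since $w$ does), $\underline u$ is a (classical, by the bootstrap regularity of $w$) subsolution of the semilinear equation by the ODE computation above, and $u$ is the classical solution; the comparison principle for $\pa_t-\Delta$ with the locally Lipschitz absorption nonlinearity $z\mapsto z^p$ on $z\ge0$ then yields $0\le\underline u(t,x)\le u(t,x)$ on $[0,\infty)\times\Omega$. Multiplying by $\phi$ and integrating,
\[
M_\Omega(u(t))\ge M_\Omega(\underline u(t))=h(t)\,M_\Omega\big(e^{t\Delta_\Omega}u_0\big)=h(t)\,M_\Omega(u_0),
\]
using the invariance of the measure $\phi\,dx$ under $e^{t\Delta_\Omega}$; letting $t\to\infty$ and using $\inf h>0$ together with $M_\Omega(u_0)>0$ (which holds because $u_0$ is nontrivial and nonnegative and $\phi>0$ on $\Omega$) gives $\lim_{t\to\infty}M_\Omega(u(t))\ge\big(\inf_{t\ge0}h(t)\big)M_\Omega(u_0)>0$. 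The main obstacle — really a bookkeeping point rather than a deep difficulty — is to get the exponent count in the convergence of $\int_0^\infty g(s)^{p-1}\,ds$ exactly right in each of the three dimensional regimes, so that the integrability threshold matches $\min\{2,1+\frac2N\}$ precisely; the logarithmic gain $\mathcal E_2$ in $N=2$ and the square-root gain $\mathcal E_1$ in $N=1$ (which come from the exterior-domain structure rather than from $\R^N$) are what shift the threshold down to $2$ in low dimensions, and one should also make sure the finiteness of $\|u_0\|_{L^1_\phi}$ assumed here is exactly the hypothesis $M_\Omega(u_0)<\infty$ from Theorem \ref{thm:main}.
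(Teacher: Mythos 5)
Your proposal is correct and is essentially identical to the paper's own proof: the same subsolution $h(t)e^{t\Delta_\Omega}u_0$, the same ODE $h'=-h^p\|S(t)u_0\|_{L^\infty}^{p-1}$ with explicit solution, the same appeal to Lemma \ref{lem:linear-Linfty-decay} and the integrability of $\big((1+t)^{N/2}\mathcal{E}_N(t)\big)^{1-p}$, and the same use of the invariance of $\phi\,dx$ to conclude. One small slip in your aside: for $N=2$, $p=2$ the integral $\int_0^\infty(1+s)^{-1}\big(1+\log(1+s)\big)^{-1}\,ds$ \emph{diverges} (the logarithmic gain is not enough, consistent with Theorem \ref{thm:vanishing}), but this does not affect your argument since you correctly settle on the strict condition $p>2$.
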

\begin{proof}
Note that $M_{\Omega}(u_0)>0$. 
We employ the comparison principle with a sub-solution $v(t,x)=h(t)S(t)u_0$
of \eqref{intro:eq1}, where $h$ is a suitable positive $C^1$ function determined as follows. 
Since $v$ satisfies $v|_{\pa\Omega}=0$ and 
\begin{align*}
\pa_tv-\Delta v+v^p
&=
\Big(h'(t)+h(t)^p\big(S(t)u_0\big)^{p-1}\Big)S(t)u_0,
\end{align*}
the function $v$ is sub-solution of \eqref{intro:eq1} if 
$h(0)=1$ and $h'(t)=-h(t)^p\|S(t)u_0\|_{L^\infty}^{p-1}$. 
This consideration suggests that 
\[
h(t)=\left(1+(p-1)\int_0^t\|S(\tau)u_0\|_{L^\infty}^{p-1}d\tau\right)^{-\frac{1}{p-1}}
\]
could be a suitable choice. Therefore the comparison principle shows that $u\geq v$ and then
\begin{align*}
M_\Omega(u(t)) 
&\geq 
M_\Omega(v(t)) 
\\
&= \left(1+(p-1)\int_0^t\|S(\tau)u_0\|_{L^\infty}^{p-1}d\tau\right)^{-\frac{1}{p-1}}M_\Omega(u_0).
\end{align*}
Combining this with Lemmas \ref{lem:linear-Linfty-decay} and \ref{lem:integral_0-t},
we have $\big((1+t)^{\frac{N}{2}}\mathcal{E}_N(t)\big)^{1-p}\in L^1(0,\infty)$ if $p>\min\{2,1+\frac{2}{N}\}$ 
and in this case we can obtain $\lim\limits_{t\to \infty}M_{\Omega}(u(t))>0$. The proof is complete.
\end{proof}

\section{The vanishing case of $M_{\Omega}(u(t))$}
\label{sec:vanishing}

Next we treat the case of the vanishing of $M_{\Omega}(u(\cdot))$.
The idea is to use a modified version of the test function method.
This argument is only applicable for absorbing nonlinearities.

\begin{theorem}\label{thm:vanishing}
Assume that  $u_0\in  L_{\phi}^1(\Omega)\cap L^\infty(\Omega)$ is nonnegative. 
If $1<p\leq \min\left\{2,1+\frac{2}{N}\right\}$, 
then the corresponding  global-in-time solution $u$ of \eqref{intro:eq1}
satisfies 
$\lim\limits_{t\rightarrow\infty}M_{\Omega}(u(t))=0$.
\end{theorem}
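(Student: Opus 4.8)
The plan is to run a modified test-function argument that exploits the \emph{absorbing} sign of the nonlinearity together with the invariance $\int_\Omega S(t)u_0\,d\mu=\int_\Omega u_0\,d\mu$. Suppose for contradiction that $m_\infty:=\lim_{t\to\infty}M_\Omega(u(t))>0$; equivalently, by the identity \eqref{eq:M_Omega} in Lemma \ref{lem:M_Omega}, the absorbed mass is finite:
\[
\int_0^\infty\!\!\int_\Omega u(s,x)^p\,\phi(x)\,dx\,ds
=M_\Omega(u_0)-m_\infty<\infty .
\]
The strategy is to pair the equation against a well-chosen space-time test function $\psi_R(t,x)=\phi(x)\,\chi(t/R^2)\,\zeta(|x|/R)$ (or, in the critical cases, a logarithmically modified cutoff), integrate by parts, and use Hölder's inequality with exponents $p,p'$ against the weight $\phi\,dx$. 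Because $-\Delta\phi=0$, the only contributions from the $\Delta$-term come from the cutoff $\zeta$, which is supported in an annulus $R\le|x|\le 2R$; these produce the usual gain of $R^{-2}$ per spatial derivative, and the time-derivative of $\chi$ gives a gain of $R^{-2}$ as well. The outcome one aims at is an inequality of the form
\[
M_\Omega(u_0)\ \lesssim\ \Big(\int_0^\infty\!\!\int_\Omega u^p\phi\,dx\,ds\Big)^{1/p}\cdot\big(\text{volume/weight factor}\big)^{1/p'}\cdot R^{-2},
\]
and then, in the subcritical/critical range $1<p\le\min\{2,1+\tfrac2N\}$, the volume/weight factor grows slowly enough that the right-hand side tends to $0$ as $R\to\infty$, forcing $M_\Omega(u_0)=0$, a contradiction with $u_0\not\equiv0$.

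The key steps, in order, are: (1) fix cutoffs $\chi\in C^\infty([0,\infty))$ with $\chi\equiv1$ on $[0,1]$, $\chi\equiv0$ on $[2,\infty)$, and $\zeta$ similarly in the radial variable; set $\psi_R=\phi\,\chi(t/R^2)\,\zeta(|x|/R)$; (2) multiply $\partial_t u-\Delta u=-u^p$ by $\psi_R$ and integrate over $(0,\infty)\times\Omega$, moving derivatives onto $\psi_R$ and using $\phi=0$ on $\partial\Omega$ together with $\partial_\nu\phi\le0$ (so the boundary term has a favourable sign, or is simply discarded after noting $u\ge0$); (3) bound $|\partial_t\psi_R|+|\Delta\psi_R|\lesssim R^{-2}\phi\,\mathbbm{1}_{\{R\le|x|\le2R,\ t\le 2R^2\}}$ plus a lower-order term $R^{-1}|x|^{-1}|\nabla\phi|$ controlled via Lemma \ref{lem:harmonic} (where $x\cdot\nabla\phi$ is bounded when $N\le2$ and $|\nabla\phi|\lesssim|x|^{1-N}$ when $N\ge3$); (4) apply Hölder on the annular region: $\int u^p\phi\cdot 1\le(\int u^p\phi)^{1/p}(\int_{\mathrm{ann}}\phi\,dx\,dt)^{1/p'}$, and compute the weight of the annulus — for $N\ge3$, $\phi\sim1$ so this is $\sim R^{N+2}$; for $N=2$, $\phi\sim\log R$ so it is $\sim R^{4}\log R$; for $N=1$, $\phi=x$ so it is $\sim R^{1+2}=R^3$; (5) assemble the pieces and check the exponent of $R$ is negative precisely when $p\le\min\{2,1+\tfrac2N\}$, sending $R\to\infty$.

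The delicate point — and the reason the theorem is not merely a restatement of Gmira–Véron — is that the test-function method usually produces only $\lim M_\Omega(u(t))\le$ (something small), not the decay to zero, when the initial mass is genuinely present; here the absorbing sign is what rescues us, since it lets us bound $M_\Omega(u_0)$ (a \emph{fixed} positive number) rather than some transient quantity, and then the contradiction is with $u_0\not\equiv0$. Concretely: the main obstacle is organizing the argument so that one obtains a bound on $M_\Omega(u_0)$ itself rather than on $M_\Omega(u(T))$ for finite $T$; this is achieved by taking the time-cutoff scale tied to $R^2$ and letting $R\to\infty$, using that $\int_0^\infty\int_\Omega u^p\phi<\infty$ under the contradiction hypothesis so that the Hölder factor $(\int u^p\phi)^{1/p}$ stays bounded while the $R^{-2}\cdot(\text{annulus weight})^{1/p'}$ factor vanishes in the stated range. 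A secondary technical nuisance is the unboundedness of $\phi$ for $N\le2$, which necessitates the radial cutoff $\zeta$ and a careful treatment of $\Delta\psi_R$; for $N=2$ one expects to need the sharper weighted estimate (Lemma \ref{lem:LpLq_N=2}) or a logarithmically refined cutoff to capture the borderline case $p=2$ exactly, paralleling the $\int_\Omega u\log|x|\,dx$ device of Ikeda–Sobajima, and for $N=1$ the reduction to three-dimensional radial solutions in Remark \ref{rem:N=1toN=3radial} gives an alternative route to the same conclusion.
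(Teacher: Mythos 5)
Your overall strategy (argue by contradiction, pair the equation with a space--time test function built from $\phi$, integrate by parts using $\Delta\phi=0$, and apply H\"older against the weight $\phi\,dx$) is in the same family as the paper's argument, but the proposal contains a structural error and leaves the genuinely hard case open. The structural error is in your target inequality. Testing $\pa_tu-\Delta u+u^p=0$ against $\psi_R$ gives the identity
\begin{equation*}
\int_\Omega u_0\,\psi_R(0,\cdot)\,dx-\int_0^\infty\!\!\int_\Omega u^p\,\psi_R\,dx\,dt
=-\int_0^\infty\!\!\int_\Omega u\,\big(\pa_t\psi_R+\Delta\psi_R\big)\,dx\,dt ,
\end{equation*}
and the nonlinear term cannot be discarded: as $R\to\infty$ it converges (by dominated convergence and Lemma \ref{lem:M_Omega}) to $M_\Omega(u_0)-m_\infty$, so the left-hand side converges to $m_\infty$, \emph{not} to $M_\Omega(u_0)$. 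The correct conclusion is $m_\infty\leq\limsup_R(\text{error})$, and the contradiction is with $m_\infty>0$ --- not, as you write, a bound ``$M_\Omega(u_0)\lesssim(\cdots)R^{-2}$'' forcing $M_\Omega(u_0)=0$ and contradicting $u_0\not\equiv0$. The latter is impossible: $M_\Omega(u_0)$ is a fixed positive number for any nontrivial data, in every range of $p$. Relatedly, your $N=1$ annulus weight is miscounted: with $\phi(x)=x\sim R$ on $\{R\leq x\leq 2R\}$ and time scale $R^2$, the weighted measure of the transition region is $\sim R^4$, not $R^{1+2}=R^3$; the value $R^3$ would ``prove'' vanishing for all $p\leq 3$, contradicting Theorem \ref{thm:nonvanishing}. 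With the correct $R^4$ one recovers exactly $p\leq 2$.

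The second, more serious gap is the critical case. At $p=\min\{2,1+\frac2N\}$ the power of $R$ in $R^{-2}(\text{annulus weight})^{1/p'}$ is exactly $0$, so the error does not ``vanish in the stated range'' as you claim; one must instead restrict the H\"older factor to the transition region and use that $\int\!\!\int_{\rm ann}u^p\phi\to0$ because the region escapes to infinity inside a set of finite $u^p\phi\,dx\,dt$-measure. That repair works for $N=1$ and $N\geq3$, but for $(N,p)=(2,2)$ the weight contributes an extra factor $(\log R)^{1/2}$ and the qualitative information $\int\!\!\int_{\rm ann}u^p\phi\to0$ is no longer enough. You flag this but do not resolve it. The paper's proof is organized precisely around this point: it uses the parabolic-ball cutoff $\xi_R=\frac{(|x|-R_1)_+^2+t}{R}$ (an \emph{outer} cutoff, equal to $1$ away from the origin in space--time, so that the initial term is made $\leq\frac12 M_{\Omega,\infty}$ rather than tending to $M_\Omega(u_0)$), and then introduces the auxiliary function $Y(R)=\int_0^R\big(\int\!\!\int u^p\phi\,\varphi_\rho^*\big)\frac{d\rho}{\rho}$ satisfying a nonlinear differential inequality $\big(\frac12M_{\Omega,\infty}+\frac{1}{\log2}Y(R)\big)^p\leq C^p\,\Theta(R)\,Y'(R)$; the contradiction then follows from $\Theta^{-1}\notin L^1$, which absorbs the logarithm at $(N,p)=(2,2)$. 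Without this (or an equivalent device), your argument establishes the theorem only for $p$ strictly below the critical exponent when $N=2$.
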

Before enter the detail, we describe 
the property of the cut-off functions $\{\varphi_R\}_{R>0}$
defined as 
\begin{equation}\label{eq:varphi_R}
\varphi_R(t,x)=\eta\big(\xi_R(x,t)\big)^{2p'}, 
\quad 
\varphi_R^*(t,x)=\eta^*\big(\xi_R(x,t)\big)^{2p'}, 
\quad
\xi_R(t,x)=\frac{(|x|-R_1)_+^2+t}{R},
\end{equation}
where $\eta\in C^\infty(\R)$ is a non-increasing function 
satisfying 
$\mathbbm{1}_{[1,\infty)} \leq \eta\leq \mathbbm{1}_{[\frac{1}{2},\infty)}$, 
$\eta^*=\mathbbm{1}_{[\frac{1}{2},1]}\eta$, $p'=p/(p-1)$ is the H\"older conjugate of $p$ 
and $R_1=\max\{R_0,R_{\phi}\}>1$ ($N\geq 2$) or $R_1=0$ ($N=1$). 
\begin{lemma}\label{lem:testfunction}
Define the family of cut-off functions $\{\varphi_R\}_{R>0}$ 
as in \eqref{eq:varphi_R}.
Then the following inequality holds:
\[
|\pa_t(\phi(x)\varphi_R(t,x))|
+
|\Delta(\phi(x)\varphi_R(t,x)|\leq \frac{C}{R}
\phi(x)\varphi_R^*(t,x)^{\frac{1}{p}}.
\]
\end{lemma}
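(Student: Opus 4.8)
The plan is to differentiate the product $\phi\varphi_R$ using the product and chain rules, and then exploit the structure of $\varphi_R=\eta(\xi_R)^{2p'}$ together with the harmonicity $\Delta\phi=0$. First I would write $\Psi:=\phi\varphi_R$. Since $\varphi_R(t,x)=\eta(\xi_R(t,x))^{2p'}$ with $\xi_R(t,x)=\frac{(|x|-R_1)_+^2+t}{R}$, the time derivative is
\[
\pa_t\Psi=\phi\,\pa_t\varphi_R=\phi\cdot 2p'\,\eta(\xi_R)^{2p'-1}\eta'(\xi_R)\cdot\frac{1}{R},
\]
and since $|\eta'|$ is bounded and supported in $[\tfrac12,1]$ (so $\eta^{2p'-1}$ on that set is $\le(\eta^*)^{2p'-1}\le(\eta^*)^{2p'/p}$ because $2p'-1\ge 2p'/p$ for $p>1$), we get $|\pa_t\Psi|\le \frac{C}{R}\phi\,(\varphi_R^*)^{1/p}$. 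The Laplacian is the genuinely computational part: expanding $\Delta(\phi\varphi_R)=\varphi_R\Delta\phi+2\nabla\phi\cdot\nabla\varphi_R+\phi\Delta\varphi_R = 2\nabla\phi\cdot\nabla\varphi_R+\phi\Delta\varphi_R$, using $\Delta\phi=0$. So I must estimate $\nabla\varphi_R$ and $\Delta\varphi_R$ and control $\nabla\phi$ against $\phi$.

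Next I would compute, writing $\rho(x)=(|x|-R_1)_+^2$ so $\xi_R=(\rho+t)/R$: $\nabla\xi_R=\frac{\nabla\rho}{R}$ with $\nabla\rho=2(|x|-R_1)_+\frac{x}{|x|}$, hence $|\nabla\xi_R|=\frac{2(|x|-R_1)_+}{R}$, and $\Delta\xi_R=\frac{\Delta\rho}{R}$ with $\Delta\rho = 2\big((|x|-R_1)_+\frac{N-1}{|x|}+\mathbbm{1}_{\{|x|>R_1\}}\big)$. Then
\[
\nabla\varphi_R = 2p'\eta(\xi_R)^{2p'-1}\eta'(\xi_R)\nabla\xi_R,
\]
\[
\Delta\varphi_R = 2p'(2p'-1)\eta(\xi_R)^{2p'-2}\eta'(\xi_R)^2|\nabla\xi_R|^2 + 2p'\eta(\xi_R)^{2p'-1}\eta''(\xi_R)|\nabla\xi_R|^2 + 2p'\eta(\xi_R)^{2p'-1}\eta'(\xi_R)\Delta\xi_R.
\]
The key observations: on $\supp\eta'\cup\supp\eta''$ one has $\xi_R\in[\tfrac12,1]$, hence $\rho+t\le R$, so $(|x|-R_1)_+^2\le R$, giving $|\nabla\xi_R|^2=\frac{4(|x|-R_1)_+^2}{R^2}\le\frac{4}{R}$ and $|\nabla\xi_R|\cdot\frac{N-1}{|x|}\le\frac{2(|x|-R_1)_+}{R|x|}(N-1)\le\frac{C}{R}$ (since $|x|\ge R_1\ge 1$ when the bracket is nonzero), and also $\frac{\mathbbm{1}_{\{|x|>R_1\}}}{R}\le\frac1R$; so every term above is $\le\frac{C}{R}$ times a power of $\eta(\xi_R)$ that is at least $2p'-2\ge 2p'/p$ (true since $2p'-2=\tfrac{2}{p-1}\cdot(1)\ge\tfrac{2}{p}\cdot p'/\,$… more directly $2p'-2\ge 2p'/p \iff 2p'(1-\tfrac1p)\ge 2\iff 2p'\cdot\tfrac1{p'}\ge 2$, an equality, so it holds). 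Thus $|\Delta\varphi_R|\le\frac{C}{R}(\varphi_R^*)^{1/p}$, whence $|\phi\Delta\varphi_R|\le\frac{C}{R}\phi(\varphi_R^*)^{1/p}$.

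For the cross term $2\nabla\phi\cdot\nabla\varphi_R$: it is supported where $\xi_R\in[\tfrac12,1]$, in particular where $|x|\ge R_1\ge R_\phi$, so Lemma~\ref{lem:harmonic} applies — for $N=2$, $|x\cdot\nabla\phi|\le 2$ hence $|\nabla\phi|\le 2/|x|$; for $N\ge3$, $|\nabla\phi|\le C|x|^{1-N}\le C$; for $N=1$, $\phi'=1$. Meanwhile $|\nabla\varphi_R|\le C\,\eta(\xi_R)^{2p'-1}|\nabla\xi_R|\le C\,(\varphi_R^*)^{1/p}\cdot\frac{2(|x|-R_1)_+}{R}\le \frac{C}{R}(\varphi_R^*)^{1/p}\cdot\min\{|x|,R^{1/2}\}$ on the support, using again $(|x|-R_1)_+\le R^{1/2}$ and $(|x|-R_1)_+\le|x|$. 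Multiplying: $|\nabla\phi\cdot\nabla\varphi_R|\le\frac{C}{R}(\varphi_R^*)^{1/p}\cdot|\nabla\phi|\cdot|x| \le \frac{C}{R}(\varphi_R^*)^{1/p}\cdot\phi$, because in every dimension $|x|\,|\nabla\phi(x)|\le C\,\phi(x)$ for $|x|\ge R_\phi$ (for $N=2$, $|x||\nabla\phi|\le2$ and $\phi\ge\tfrac12\log|x|\ge\tfrac12\log R_\phi>0$ is bounded below on any bounded-away region but grows, so in fact $|x||\nabla\phi|\le 2\le C\phi$; for $N\ge3$, $|x||\nabla\phi|\le C|x|^{2-N}\le C\le C\phi$ since $\phi$ is bounded below away from $0$ on $|x|\ge R_\phi$; for $N=1$, $x\phi'=x$ and $\phi=x$, so it is an equality).

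The main obstacle is bookkeeping the comparison $|x|\,|\nabla\phi(x)|\le C\,\phi(x)$ uniformly on the relevant region and making sure the exponents of $\eta$ are everywhere $\ge 2p'/p$ so that each derivative term is dominated by $(\varphi_R^*)^{1/p}=\eta^*(\xi_R)^{2p'/p}$ (rather than a lower power that $\varphi_R^*$ does not control); the inequality $2p'-2\ge 2p'/p$ is exactly the equality $2p'(p-1)/p=2$, and $2p'-1>2p'/p$, so all terms clear this bar, but the case $N=2$ requires care because $\phi$ is unbounded and vanishes only logarithmically slowly, so one cannot bound $\phi$ from below by a constant near $|x|=R_\phi$ — instead one uses $|x||\nabla\phi|\le 2$ directly and $\phi\ge\tfrac12\log(|x|/R_0)\ge c>0$ on $|x|\ge R_1$ after possibly enlarging $R_1$. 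Assembling the three pieces $\pa_t\Psi$, $\phi\Delta\varphi_R$, and $\nabla\phi\cdot\nabla\varphi_R$ yields the claimed bound $|\pa_t(\phi\varphi_R)|+|\Delta(\phi\varphi_R)|\le\frac{C}{R}\phi\,(\varphi_R^*)^{1/p}$, with $C$ depending only on $N$ and $p$.
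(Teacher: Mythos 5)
Your computation follows essentially the same route as the paper: expand $\Delta(\phi\varphi_R)$ using $\Delta\phi=0$, note $\nabla\xi_R=\frac{2x}{R}(1-\frac{R_1}{|x|})_+$ and $|\Delta\xi_R|\le\frac{2N}{R}$, use $(|x|-R_1)_+^2\le R$ on the support of $\eta'(\xi_R)$, and check that the surviving powers of $\eta$ are at least $2p'-2=2p'/p$ so that $\varphi_R^{*\,1/p}$ controls them; this bookkeeping is correct. Two steps need repair, though neither is fatal. First, for $N=2$ you write ``$|x\cdot\nabla\phi|\le2$ hence $|\nabla\phi|\le2/|x|$'': the lemma only controls the radial derivative $\frac{x}{|x|}\cdot\nabla\phi$, not the full gradient. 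You are saved because $\nabla\varphi_R$ is parallel to $x/|x|$, so the cross term $\nabla\phi\cdot\nabla\varphi_R$ only ever sees the radial component; state it that way. Second, your cross-term estimate discards the factor $(|x|-R_1)_+$ in $|\nabla\xi_R|$ (replacing it by $|x|$) and then needs $\phi\ge c>0$ on $\{|x|\ge R_1\}$; this fails if $R_1=R_0$ and $\pa\Omega$ meets the sphere $|x|=R_0$, where $\phi$ vanishes while $|x||\nabla\phi|$ does not (Hopf). Either enlarge $R_1$ strictly beyond $R_0$ (as you hint), or do what the paper does: keep the factor and use the explicit inequality $(1-\frac{R_1}{|x|})_+\le\phi_0(\frac{x}{R_0})\le\phi(x)$, which absorbs the degeneracy of $\phi$ near $|x|=R_1$ directly and works uniformly for $N\ge2$ without any lower bound on $\phi$.
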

\begin{proof}
It is easy to see that $|\phi\pa_t\varphi_R|
\leq \frac{2p'\|\eta'\|_{L^\infty}}{R}\varphi_R^*(t,x)^{\frac{1}{p}}$ for all dimensions.
Therefore we focus our attention to 
\[
\Delta(\phi\varphi_R)
=
(2\nabla \phi\cdot \nabla \xi_R+\phi\Delta\xi_R)\widetilde{\eta}'(\xi_R)
+
\phi|\nabla \xi_R|^2 \widetilde{\eta}''(\xi_R), 
\]
where we have put $\widetilde{\eta}=\eta^{2p'}$ for simplicity. 
Note that 
$\nabla\xi_R=\frac{2x}{R}(1-\frac{R_1}{|x|})_+$
and 
$|\Delta\xi_R|\leq 
\frac{2N}{R}$.
If $N=1$, then one has $\phi(x)=x$ and hence
a direct computation yields 
$|\Delta(\phi\varphi_R)|\leq \frac{C}{R}\phi$, 
where $C$ is a positive constant depending only on $\eta$.
If $N=2$, then Lemma \ref{lem:harmonic} {(ii)} and 
$(1-\frac{R_1}{|x|})\leq \phi_0(\frac{x}{R_1})\leq \phi_0(\frac{x}{R_0})$ on $\R^2\setminus B(0,R_1)$ 
give
\begin{align*}
|\nabla \phi\cdot \nabla \xi_R|\leq \frac{4}{R}\phi, \quad |\nabla \xi_R|^2\leq \frac{4}{R}
\quad \text{on}\ \Big\{(x,t);\xi_R\in [\tfrac{1}{2},1]\Big\}
\end{align*}
which provides the desired inequality. If $N\geq 3$, then 
the computation is similar.
\end{proof}
\begin{proof}[Proof of Theorem \ref{thm:vanishing}]
For simplicity we put $M_{\Omega,\infty}=\lim\limits_{t\to\infty}M_{\Omega}(u(t))$. 
We assume $M_{\Omega,\infty}>0$ and argue by contradiction.
Let $\varphi_R$ be given in Lemma \ref{lem:testfunction}.
Since $\varphi_R(x,t)\rightarrow 0$ pointwisely as $R\rightarrow\infty$, 
by the dominated convergence 
we can choose a constant $R_2\geq R_1$ such that 
for every $R\geq R_2$, 
\begin{gather}\label{eq:neglect-initial}
\int_{\Omega}u_0(x)\phi(x)\varphi_R(0,x)\,dx\leq \frac{1}{2}M_{\Omega,\infty}.
\end{gather}
Then as in the proof of Lemma \ref{lem:M_Omega}, 
we see that for every $T>R$, 
\begin{align*}
&
M_{\Omega}(u(T))-\int_{\Omega}u_0(x)\phi(x)\varphi_R(0,x)\,dx
+\int_0^T\int_{\Omega}u(t,x)^p\phi(x)\varphi_R(t,x)\,dx\,dt
\\
&=
\int_0^T\int_{\Omega}u(t,x)\Big(\phi(x)\partial_t\varphi_R(t,x)+\Delta(\phi(x)\varphi_R(t,x))\Big)\,dx\,dt
\\
&\leq 
\frac{C}{R}
\int_0^T\int_{\Omega}u(t,x)\phi(x)\varphi_R^*(t,x)^{\frac{1}{p}}
\,dx\,dt,
\end{align*}
where we have used $\varphi_R(T,x)=1$.
Letting $T\rightarrow\infty$, 
using the dominated convergence theorem, 
the H\"older inequality and \eqref{eq:neglect-initial},
we arrive at 
\begin{align}
\nonumber
&
\frac{1}{2}M_{\Omega,\infty}
+\int_0^\infty\int_{\Omega}u(t,x)^p\phi(x)\varphi_R(t,x)\,dx\,dt
\\
\nonumber
&\leq 
\frac{C}{R}
\int_0^\infty\int_{\Omega}u(t,x)\phi(x)\varphi_R^*(t,x)^{\frac{1}{p}}
\,dx\,dt
\\
&
\label{eq:testfuntionmethod}
\leq 
\frac{C}{R}
\left(
\int_0^\infty\int_{\Omega}\phi(x)\mathbbm{1}_{[\frac{1}{2},1]}(\xi_R(t,x))
\,dx\,dt
\right)^{1-\frac{1}{p}}
\left(
\int_0^\infty\int_{\Omega}u(t,x)^p\phi(x)\varphi_R^*(t,x)
\,dx\,dt
\right)^{\frac{1}{p}}
\end{align}
for every $R\geq R_2$.
Here we shall introduce the auxiliary function 
\[
Y(R)=
\int_0^R 
\left(
\int_0^\infty\int_{\Omega}u(t,x)^p\phi(x)\phi_\rho^*(t,x)
\,dx\,dt
\right)\frac{d\rho}{\rho}, \quad R>0
\]
which is well-defined and bounded. As in the proof of \cite[Lemma 2.2]{IkedaSobajima2019}, we can see that 
\[Y(R)
\leq \log 2\int_0^\infty\int_{\Omega}u(t,x)^p\phi(x)\varphi_R(t,x)\,dx\,dt, 
\]
and therefore \eqref{eq:testfuntionmethod} yields 
\[
\left(
\frac{1}{2}M_{\Omega,\infty}
+\frac{1}{\log 2}Y(R)\right)^{p}
\leq C^p
\Theta(R)Y'(R), 
\]
where we have put
\[
\Theta(R)=\left(\frac{1}{R}\int_0^R\int_{\Omega\cap\{(|x|-R_1)^2+t\leq R\}}\phi(x)\,dx\,dt\right)^{p-1}.
\]
Solving the above differential inequality, 
we can see that 
to conclude a contradiction, 
it suffices to check that $\Theta^{-1}\notin L^1((R_2,\infty))$ which implies $Y(R)<0$ for
sufficiently large $R$. 
For the case $N\geq 3$, 
we see from $\phi\leq 1$ that 
\begin{align*}
\big(\Theta(R)\big)^{-1}\geq
\left(\frac{1}{R}\iint_{\{(|x|-R_1)^2+t\leq R\}}\,dx\,dt\right)^{1-p}
\geq CR^{-\frac{N}{2}(p-1)}
\end{align*}
which is not integrable if and only if $p\leq 1+\frac{2}{N}$.
For the case $N=1$, we see from the explicit form $\phi(x)=x$ that 
\begin{align*}
\big(\Theta(R)\big)^{-1}
\geq 
\left(\frac{1}{R}\iint_{\{x^2+t\leq R\}}x\,dx\,dt\right)^{1-p}
=CR^{1-p}
\end{align*}
which is not integrable if and only if $p\leq 2$. 
Finally we consider the case $N=2$. In this case, we see that 
for $R>R_1^2$,  
\begin{align*}
\big(\Theta(R)\big)^{-1}
&\geq 
\left(\frac{1}{R}\int_0^R \int_{\Omega \cap\{(|x|-R_1)^2+t\leq R\}}\phi(x)\,dx\,dt\right)^{1-p}
\\
&\geq 
\left(\int_{\{r_0\leq |x|\leq \sqrt{R}+R_1\}}\log\frac{|x|}{r_0}\,dx\right)^{1-p}
\\
&\geq 
\left(2\pi\int_{r_0}^{2\sqrt{R}}r\log\frac{r}{r_0}\,dr\right)^{1-p}
\\
&\geq 
\left(2\pi R\log\frac{4R}{r_0^2}\right)^{1-p}
\end{align*}
which is not integrable if and only if $p\leq 2=1+\frac{2}{N}$. 
The proof is complete.
\end{proof}

\begin{remark}
Theorem \ref{thm:vanishing} can be proved 
via the usual dominated convergence theorem except the case $(N,p)=(2,2)$. 
We emphasize that 
the method with the auxiliary function $Y(R)$ 
enables us to treat the exceptional case $(N,p)=(2,2)$. 
\end{remark}

\section{The asymptotic behavior for non-vanishing case} 
\label{sec:asymptotics}

In the case $p>\min\{2,1+\frac{2}{N}\}$, we define 
\begin{equation}\label{eq:u_infty_def}
u_{\infty}(x)=u_0(x)-\int_0^\infty \big(u(s,x)\big)^p\,ds
\end{equation}
which is well-defined in 
$L^\infty(\Omega)\cap L_{\phi}^1(\Omega)$
when $u_0$ belongs to the same class $L^\infty(\Omega)\cap L_{\phi}^1(\Omega)$.
To find the asymptotic profile of $u$, we further utilize the other following 
auxiliary function $F$. 
\begin{lemma}\label{lem:diffusionstate}
Assume that  $u_0\in  L_{\phi}^1(\Omega)\cap L^\infty(\Omega)$ is nonnegative. 
Let $u$ be the corresponding solution of \eqref{intro:eq1}.
If $p>\min\{2,1+\frac{2}{N}\}$, then the function
\[
F:(t,x)\mapsto \int_t^\infty u(s,x)^p\,ds\in L^\infty(\Omega)\cap L_{\phi}^1(\Omega)
\]
is well-defined. Moreover, 
there exist a positive consntat $C$ such that 
for every $1\leq q\leq \infty$, 
\begin{align*}
&\|F(t)\|_{L^1_\phi}
+
(1+t)\|\pa_tF(t)\|_{L^1_\phi}
\\
&\quad+
(1+t)^{\frac{N}{2}(1-\frac{1}{q})}\mathcal{E}_N(t)
\Big(\|F(t)\|_{L^q}
+
(1+t)\|\pa_tF(t)\|_{L^q}\Big)
\leq 
C|\!|\!| u_0 |\!|\!|^p
\widetilde{\mathcal{E}}_{N}(t), \quad t\geq 0, 
\end{align*}
where $|\!|\!| u_0 |\!|\!|=\|u_0\|_{L^\infty}+\|u_0\|_{L_\phi^1}$
and
\[
\widetilde{\mathcal{E}}_{N}(t)
=
\begin{cases}
(1+t)^{2-p}
&\text{if}\ N=1,
\\
(1+t)^{2-p}\big(1+\log(1+t)\big)^{1-p}
&\text{if}\ N=2,
\\
(1+t)^{1-\frac{N}{2}(p-1)}
&\text{if}\ N\geq 3.
\end{cases}
\]

\end{lemma}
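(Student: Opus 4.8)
The plan is to estimate $F(t,x)=\int_t^\infty u(s,x)^p\,ds$ by exploiting the already-available decay of $u(s,\cdot)$ in $L^q$ and $L_\phi^1$, together with the integral bound on $M_\Omega(u(\cdot))$ from Lemma~\ref{lem:M_Omega}. First I would record the $L^q$-$L_\phi^1$ decay of the solution itself: since $0\le u(s,\cdot)\le e^{s\Delta_\Omega}u_0$ pointwise (the nonlinearity is absorbing), Lemma~\ref{lem:linear-Linfty-decay} gives
\[
\|u(s,\cdot)\|_{L^q(\Omega)}\le \frac{C|\!|\!|u_0|\!|\!|}{(1+s)^{\frac{N}{2}(1-\frac1q)}\mathcal{E}_N(s)},\qquad 1\le q\le\infty,
\]
and in particular $\|u(s,\cdot)\|_{L^\infty}\le C|\!|\!|u_0|\!|\!|\big((1+s)^{N/2}\mathcal{E}_N(s)\big)^{-1}$. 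For the weighted $L^1$ bound on powers, write $\|u(s,\cdot)^p\|_{L_\phi^1}\le\|u(s,\cdot)\|_{L^\infty}^{p-1}\|u(s,\cdot)\|_{L_\phi^1}\le\|u(s,\cdot)\|_{L^\infty}^{p-1}M_\Omega(u_0)$, which is integrable in $s$ precisely because $p>\min\{2,1+\frac2N\}$ makes $\big((1+s)^{N/2}\mathcal{E}_N(s)\big)^{1-p}\in L^1(0,\infty)$ (Lemmas~\ref{lem:integral_0-t}, \ref{lem:integral_t-infty}). Then $\|F(t)\|_{L_\phi^1}\le\int_t^\infty\|u(s)^p\|_{L_\phi^1}\,ds$, and Lemma~\ref{lem:integral_t-infty} turns the tail integral into exactly the announced bound $C|\!|\!|u_0|\!|\!|^p\,\widetilde{\mathcal{E}}_N(t)$, after checking that in each dimension the exponent $\tfrac{N}{2}(p-1)$ (resp.\ $p-1$ with a log in $N=2$) lands in the range where the tail integral converges with the stated rate.

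Next I would handle the unweighted $L^q$ bounds on $F$. The subtlety is that $\|u(s,\cdot)^p\|_{L^q}=\|u(s,\cdot)\|_{L^{pq}}^p$, and the $L^{pq}$ decay rate $(1+s)^{-\frac N2(1-\frac1{pq})}$ is \emph{not} integrable near $s=0$ in general, nor does it decay fast enough to integrate to $t^{-\frac N2(1-\frac1q)}\mathcal{E}_N(t)^{-1}\widetilde{\mathcal{E}}_N(t)$ by brute force. The standard remedy — and I expect this to be the technical heart of the argument — is to split the time integral and use the smoothing of $S(t-s)$: for $s\ge t/2$ use the decay of $\|u(s,\cdot)^p\|_{L^q}$ directly, while for $s<t/2$ pass through $e^{(t-s)\Delta_\Omega}$. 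More precisely, since $F(t)$ satisfies $\partial_t F=-u(t)^p$ and $F(t)=e^{(t-t_0)\Delta_\Omega}F(t_0)+\int_{t_0}^t e^{(t-s)\Delta_\Omega}u(s)^p\,ds$ is \emph{not} quite right (because $u^p$ is a source, not the evolution of $F$), the cleaner route is: for $q>1$ write $F(t,x)=\int_t^\infty u(s,x)^p\,ds$ and estimate by Hölder in $x$ after first using the pointwise bound $u(s,x)\le\|u(s)\|_{L^\infty}^{1-\theta}u(s,x)^{\theta}$ with $\theta=1/p$ at the right places; concretely, $\|F(t)\|_{L^q}\le\int_t^\infty\|u(s)\|_{L^\infty}^{p-1}\|u(s)\|_{L^q}\,ds$ when $q\ge1$, and the integrand is $\le C|\!|\!|u_0|\!|\!|^p(1+s)^{-\frac N2(p-1)-\frac N2(1-\frac1q)}\mathcal{E}_N(s)^{-p}$, whose tail integral (Lemma~\ref{lem:integral_t-infty}) is exactly $C|\!|\!|u_0|\!|\!|^p(1+t)^{-\frac N2(1-\frac1q)}\mathcal{E}_N(t)^{-1}\widetilde{\mathcal{E}}_N(t)$ after reading off that the surplus power is $1-\tfrac N2(p-1)$ (with the $\log$ correction in $N=2$). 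So in fact the naive estimate $\|u(s)^p\|_{L^q}\le\|u(s)\|_{L^\infty}^{p-1}\|u(s)\|_{L^q}$ suffices for every $q$ and no semigroup splitting is needed for $F$ itself — the apparent difficulty dissolves, and one only needs to verify the arithmetic of exponents dimension by dimension.

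For the $\partial_t F$ estimates, note $\partial_t F(t,x)=-u(t,x)^p$, so $(1+t)\|\partial_t F(t)\|_{L^q}=(1+t)\|u(t)^p\|_{L^q}\le(1+t)\|u(t)\|_{L^\infty}^{p-1}\|u(t)\|_{L^q}$, and this is bounded by $C|\!|\!|u_0|\!|\!|^p(1+t)^{1-\frac N2(p-1)}(1+t)^{-\frac N2(1-\frac1q)}\mathcal{E}_N(t)^{-p}$; since $\mathcal{E}_N(t)^{-(p-1)}\le\widetilde{\mathcal{E}}_N(t)\cdot(1+t)^{-1}\cdot(\text{the right power})\cdot\mathcal{E}_N(t)$ one matches the target up to constants — again pure bookkeeping once $\widetilde{\mathcal{E}}_N$ is unpacked. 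The well-definedness of $F(t,\cdot)$ as an element of $L^\infty(\Omega)\cap L_\phi^1(\Omega)$ follows from the $q=\infty$ and weighted-$L^1$ bounds just established, and the class membership of $u_\infty=u_0-F(0,\cdot)$ is immediate since both summands lie in $L^\infty\cap L_\phi^1$. The main obstacle, such as it is, is organizational: keeping the three dimensional regimes ($N=1$, $N=2$ with its logarithmic corrections, $N\ge3$) straight while invoking Lemma~\ref{lem:integral_t-infty} with the correct $(r,m)$ in each case, and double-checking that $p>\min\{2,1+\frac2N\}$ is exactly the condition placing every exponent in the convergent range. I would present the $N\ge3$ case in full, then indicate the $N=1$ and $N=2$ cases as parallel computations with $\mathcal{E}_N$ replaced appropriately.
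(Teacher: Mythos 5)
Your proposal is correct and follows essentially the same route as the paper: the pointwise comparison $u(s,x)^p\le\|S(s)u_0\|_{L^\infty}^{p-1}S(s)u_0(x)$, the linear decay of Lemma \ref{lem:linear-Linfty-decay}, and the tail integral of Lemma \ref{lem:integral_t-infty} to produce $\widetilde{\mathcal{E}}_N(t)$, with the $L^q$ bounds obtained from $\|u(s)^p\|_{L^q}\le\|u(s)\|_{L^\infty}^{p-1}\|u(s)\|_{L^q}$ exactly as the paper's ``similar'' remark intends. The detour about a semigroup splitting is correctly discarded, and the exponent bookkeeping checks out in all three dimensional regimes.
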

\begin{proof}
By a direct computation with comparison principle, we have 
\begin{align*}
u(s,x)^p
\leq 
\|S(s)u_0\|_{L^\infty}^{p-1}S(s)u_0(x), \quad s>0.
\end{align*}
As is mentioned in the proof of Theorem \ref{thm:nonvanishing},
we see from Lemma \ref{lem:integral_t-infty} that $F\in C^1((0,\infty);L_\phi^1)$ with the estimate 
\begin{align*}
\|F(t)\|_{L^1_\phi}\leq 
C
\left(\|u_0\|_{L^\infty}+\|u_0\|_{L_\phi^1}\right)^p
\int_t^\infty \Big((1+s)^{\frac{N}{2}}\mathcal{E}_{N}(s)\Big)^{1-p}\,ds.
\end{align*}
The $L^q$-estimates ($1\leq q\leq \infty$) are similar.
\end{proof}

\begin{theorem}\label{thm;asymptotics}
Let $N\geq 1$ and $p>\min\{2,1+\frac{2}{N}\}$. 
Then for every $1\leq q\leq \infty$, one has
\[
(1+t)^{\frac{N}{2}(1-\frac{1}{q})}\mathcal{E}_N(t)
\|u(t)-S(t)u_\infty\|_{L^q}\leq 
C\left(
\widetilde{\mathcal{E}}_{N}(t)
+\frac{1}{t}\int_0^t\widetilde{\mathcal{E}}_{N}(s)\,ds\right),
\]
where $u_\infty$ is defined by \eqref{eq:u_infty_def} 
and 
$\widetilde{\mathcal{E}}_{N}(\cdot)$ is given in Lemma \ref{lem:diffusionstate}.
\end{theorem}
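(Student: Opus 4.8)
The plan is to start from the Duhamel formula \eqref{integralform} and subtract the target profile $S(t)u_\infty$. Writing $u_\infty = u_0 - \int_0^\infty u(s)^p\,ds$ and using the semigroup property, I would decompose
\[
u(t)-S(t)u_\infty
=
-\int_0^t S(t-s)u(s)^p\,ds
+
S(t)\!\int_0^\infty u(s)^p\,ds
=
\int_0^t S(t-s)\big(u(s)^p\big)\,ds\ \text{-terms}
\]
and rearrange so that the difference is expressed through the auxiliary function $F(t,x)=\int_t^\infty u(s,x)^p\,ds$ from Lemma \ref{lem:diffusionstate}. Concretely, $\int_0^\infty u(s)^p\,ds = F(0)$, and $\int_0^t S(t-s)u(s)^p\,ds$ can be paired against $S(t)F(0)$ so that after a Fubini/integration-by-parts manipulation the remainder splits into a ``boundary'' piece $F(t)$ (the tail of the nonlinear integral that has not yet been diffused) and a ``commutator'' piece of the form $\int_0^t S(t-s)\,\pa_s F(s)\,ds$ or $\frac{1}{t}\int_0^t(\cdots)\,ds$ measuring how far $S(t-s)$ is from $S(t)$ on the slowly-varying quantity $F$. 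This is exactly the structure that produces the two terms $\widetilde{\mathcal E}_N(t)$ and $t^{-1}\int_0^t\widetilde{\mathcal E}_N(s)\,ds$ on the right-hand side.

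The first (boundary) term is handled directly: by Lemma \ref{lem:diffusionstate}, $\|F(t)\|_{L^q}$ and $\|F(t)\|_{L^1_\phi}$ are already controlled by $|\!|\!| u_0|\!|\!|^p\,\widetilde{\mathcal E}_N(t)$ divided by the relevant decay factor $(1+t)^{\frac N2(1-\frac1q)}\mathcal E_N(t)$, so multiplying by that factor leaves exactly $C\widetilde{\mathcal E}_N(t)$. For the commutator term I would split $\int_0^t = \int_0^{t/2}+\int_{t/2}^t$. On $[0,t/2]$ the operator $S(t-s)$ still enjoys the full $L^p$–$L^q$ smoothing of Lemma \ref{lem:linear-Linfty-decay} with time $t-s\sim t$, so I bound $\|S(t-s)\pa_sF(s)\|_{L^q}$ by $(1+t)^{-\frac N2(1-\frac1q)}\mathcal E_N(t)^{-1}\big(\|\pa_sF(s)\|_{L^q}+\|\pa_sF(s)\|_{L^1_\phi}\big)$ and then use the bound $\|\pa_sF(s)\|_{\cdot}\lesssim (1+s)^{-1}\widetilde{\mathcal E}_N(s)$ from Lemma \ref{lem:diffusionstate}; integrating in $s$ over $[0,t/2]$ produces the averaged term $t^{-1}\int_0^t\widetilde{\mathcal E}_N(s)\,ds$. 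On $[t/2,t]$, where $t-s$ is small, one instead uses the contraction estimates (no smoothing needed) together with the fact that $F(s)$ for $s\in[t/2,t]$ is comparable to $F(t)$ up to the monotone tail, recovering again a multiple of $\widetilde{\mathcal E}_N(t)$; here Lemma \ref{lem:integral_0-t} is used to evaluate the resulting $s$-integrals.

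The main obstacle I anticipate is the bookkeeping near $s=t$ in the two-dimensional case, where the extra logarithmic factor $\mathcal E_2(t)=1+\log(1+t)$ and the matching $\widetilde{\mathcal E}_2$ with its $(\log)^{1-p}$ weight must be tracked so that the logarithms cancel correctly rather than accumulate; this is precisely where the refined estimate of Lemma \ref{lem:LpLq_N=2} (with the weighted $L^1_\phi$ norm on the right) is indispensable, since the plain $L^1$–$L^q$ bound would be too weak. A secondary technical point is justifying the Fubini exchange and the identity $\int_0^\infty u(s)^p\,ds=F(0)$ in $L^\infty\cap L^1_\phi$, which follows from the integrability already established in Lemma \ref{lem:diffusionstate}, so it is routine. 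Once all pieces are assembled and multiplied through by $(1+t)^{\frac N2(1-\frac1q)}\mathcal E_N(t)$, the stated inequality follows for all $1\le q\le\infty$, with the endpoint cases $q=1$ and $q=\infty$ obtained either directly from Lemma \ref{lem:linear-Linfty-decay} or by interpolation.
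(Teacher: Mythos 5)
Your overall architecture matches the paper's: both reduce the difference $u(t)-S(t)u_\infty$ to the auxiliary function $F(t,x)=\int_t^\infty u(s,x)^p\,ds$ of Lemma \ref{lem:diffusionstate}, split the resulting time integral at $t/2$, and handle a boundary term $F(t)$ plus an integral term. The boundary term and the near-field part $s\in[t/2,t]$ are fine as you describe them (though for the latter the relevant tool is Lemma \ref{lem:integral_t-infty} applied to $\int_{t/2}^{\infty}\|\pa_sF(s)\|_{L^q}\,ds$, not Lemma \ref{lem:integral_0-t}).

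The gap is in the far-field part $s\in[0,t/2]$. You propose to bound $\|S(t-s)\pa_sF(s)\|_{L^q}$ by the decay factor of Lemma \ref{lem:linear-Linfty-decay} times $\|\pa_sF(s)\|_{L^q}+\|\pa_sF(s)\|_{L^1_\phi}\lesssim(1+s)^{-1}\widetilde{\mathcal{E}}_{N}(s)$, and you claim that integrating over $[0,t/2]$ produces $t^{-1}\int_0^t\widetilde{\mathcal{E}}_{N}(s)\,ds$. It does not: it produces $\int_0^{t/2}(1+s)^{-1}\widetilde{\mathcal{E}}_{N}(s)\,ds$, which for every $N$ and every admissible $p$ converges to a \emph{positive constant} as $t\to\infty$ (for $N\geq 3$ this is $\int_0^{t/2}(1+s)^{-\frac N2(p-1)}\,ds$ with $\frac N2(p-1)>1$; morally it is just $\int_0^\infty\int_\Omega u^p\phi\,dx\,ds<\infty$). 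Hence after multiplying by $(1+t)^{\frac N2(1-\frac1q)}\mathcal{E}_N(t)$ you recover only the trivial bound $O(1)$ — i.e.\ that $u(t)-S(t)u_\infty$ decays no worse than $S(t)u_\infty$ itself — whereas the content of the theorem is that the normalized difference is $o(1)$, since $\widetilde{\mathcal{E}}_{N}(t)+t^{-1}\int_0^t\widetilde{\mathcal{E}}_{N}(s)\,ds\to 0$. The missing ingredient is the analyticity of the semigroup: on $[0,t/2]$ you must move the $s$-derivative off $F$ and onto the semigroup, writing this piece as $\int_0^{t/2}\Delta S(t-s)F(s)\,ds$. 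The paper does this by introducing the auxiliary problem $\pa_t\widetilde u-\Delta\widetilde u=F$, $\widetilde u(0)=0$, and observing $u(t)-S(t)u_\infty=\pa_t\widetilde u(t)$; equivalently, in your formulation one should estimate $\int_0^{t/2}\bigl(S(t-s)-S(t)\bigr)\pa_sF(s)\,ds$ using $S(t-s)-S(t)=-\int_{t-s}^{t}\Delta S(\tau)\,d\tau$. Either way, the bound $\|\Delta S(\tfrac{t-s}{2})\|_{L^q\to L^q}\lesssim (t-s)^{-1}\sim t^{-1}$ from \eqref{eq:LpLq-usual_Lap}, applied together with $\|F(s)\|_{L^1_\phi}+\|F(s)\|_{L^q}\lesssim\widetilde{\mathcal{E}}_{N}(s)$, is what produces the genuine average $t^{-1}\int_0^{t/2}\widetilde{\mathcal{E}}_{N}(s)\,ds$. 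Without this step the claimed estimate is not reached.
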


\begin{proof}
Thanks to Lemma \ref{lem:diffusionstate},
we can consider the auxiliary problem 
\begin{equation}\label{eq:aux-heat}
\begin{cases}
\pa_t\widetilde{u}-\Delta \widetilde{u}=F&\text{in}\ (0,\infty)\times \Omega, 
\\
\widetilde{u}=0&\text{on}\ (0,\infty)\times \pa\Omega,
\\
\widetilde{u}(0,x)=0 &\text{in}\ \Omega. 
\end{cases}
\end{equation}
Then the corresponding solution $\widetilde{u}$ of the problem \eqref{eq:aux-heat} 
also satisfies
\begin{equation}\label{eq:aux-heat2}
\begin{cases}
\pa_t(\pa_t\widetilde{u})-\Delta (\pa_t\widetilde{u})=\pa_tF(t,x)=-u(t,x)^p &\text{in}\ (0,\infty)\times \Omega, 
\\
\widetilde{u}=0&\text{on}\ (0,\infty)\times \pa\Omega,
\\
\pa_t\widetilde{u}(0,x)=F(0) &\text{in}\ \Omega.
\end{cases}
\end{equation}
This provides that $u(t)-S(t)u_{\infty}=\pa_t \widetilde{u}(t)$, where $u_\infty = u_0-F(0)$.
Therefore 
the problem of the derivation of estimates for $u(t)-S(t)u_{\infty}$ 
is reduced to the one for $\pa_t\widetilde{u}$. 
On the other hand, since by the Duhamel formula, $\widetilde{u}$ can be represented as 
\begin{align*}
\widetilde{u}(t)
&=
\int_0^t S(t-s)F(s)\,ds=\int_0^{\frac{t}{2}} S(t-s)F(s)\,ds+\int_0^\frac{t}{2}S(s)F(t-s)\,ds
\end{align*}
and therefore
\begin{align*}
\pa_t \widetilde{u}(t)
=
\int_\frac{t}{2}^t S(t-s)\pa_tF(s)\,ds
+S(\tfrac{t}{2})F(\tfrac{t}{2})
+\int_0^{\frac{t}{2}} \Delta S(t-s)F(s)\,ds={\bf (I)}+{\bf (II)}+{\bf (III)}.
\end{align*}
We give reasonable estimates for the terms {\bf (I)}, {\bf (II)} and {\bf (III)}, respectively. 
For {\bf (I)} and {\bf (II)}, 
the $L^q$-contraction together with Lemma \ref{lem:diffusionstate}
yields
\begin{align*}
\|{\bf (I)}\|_{L^q}
\leq 
\int_\frac{t}{2}^t \|\pa_tF(s)\|_{L^q}\,ds
\leq 
\int_\frac{t}{2}^\infty \|\pa_tF(s)\|_{L^q}\,ds
\leq 
\frac{C\Big(\|u_0\|_{L^\infty}+\|u_0\|_{L_\phi^1}\Big)^p
\widetilde{\mathcal{E}}_{N}(\frac{t}{2})}{
(1+\frac{t}{2})^{\frac{N}{2}(1-\frac{1}{q})}\mathcal{E}_N(\frac{t}{2})}.
\end{align*}
\[
\|S(\tfrac{t}{2})F(\tfrac{t}{2})\|_{L^q}
\leq
\|F(\tfrac{t}{2})\|_{L^q}
\leq 
\frac{C\Big(\|u_0\|_{L^\infty}+\|u_0\|_{L_\phi^1}\Big)^p
\widetilde{\mathcal{E}}_{N}(\frac{t}{2})}{
(1+\frac{t}{2})^{\frac{N}{2}(1-\frac{1}{q})}\mathcal{E}_N(\frac{t}{2})},
\]
respectively.
For {\bf (III)}, 
by Lemmas \ref{lem:diffusionstate} and \ref{lem:linear-Linfty-decay},
we see from 
the property of analytic positive semigroup that
\begin{align*}
\|{\bf (III)}\|_{L^q}
&\leq 
C\int_0^{\frac{t}{2}} 
\|\Delta S(\tfrac{t-s}{2})\|_{L^q\to L^q}
\|S(\tfrac{t-s}{2})F(s)\|_{L^q}\,ds
\\
&\leq 
\frac{C}{(1+\frac{t}{4})^{\frac{N}{2}(1-\frac{1}{q})}
\mathcal{E}_N(\frac{t}{4})}
\frac{1}{t}\int_0^\frac{t}{2}\Big(\|F(s)\|_{L_\phi^1}+\|F(s)\|_{L^q}\Big)\,ds,
\\
&\leq 
\frac{C}{(1+t)^{\frac{N}{2}(1-\frac{1}{q})}
\mathcal{E}_N(t)}
\frac{1}{t}\int_0^\frac{t}{2}\widetilde{\mathcal{E}}_{N}(s)\,ds.
\end{align*}
Summarizing the above estimates, we obtain 
\[
(1+t)^{\frac{N}{2}(1-\frac{1}{q})}\mathcal{E}_N(t)
\|u(t)-S(t)u_\infty\|_{L^q}\leq 
C\left(
\widetilde{\mathcal{E}}_{N}(t)
+\frac{1}{t}\int_0^t\widetilde{\mathcal{E}}_{N}(s)\,ds\right).
\]
The proof is complete.
\end{proof}

\begin{remark}
If $p>\min\{3,1+\frac{4}{N}\}$, then a new auxiliary function 
\[
F_*:(t,x)\to -\int_t^\infty F(s,x)\,ds
\]
is also well-defined and $\pa_t^2F_*=-u^p$. This gives that 
the solution $\widetilde{u}_*$ of the other auxiliary problem
\begin{equation}\label{eq:aux-heat-extra}
\begin{cases}
\pa_t\widetilde{u}_*-\Delta \widetilde{u}_*=F_*&\text{in}\ (0,\infty)\times \Omega, 
\\
\widetilde{u}_*=0&\text{on}\ (0,\infty)\times \pa\Omega,
\\
\widetilde{u}_*(0,x)=0 &\text{in}\ \Omega
\end{cases}
\end{equation}
satisfies
\begin{equation}\label{eq:aux-heat-extra-2}
\begin{cases}
\pa_t(\pa_t^2\widetilde{u}_*)-\Delta (\pa_t^2\widetilde{u}_*)=-u^p&\text{in}\ (0,\infty)\times \Omega, 
\\
\pa_t^2\widetilde{u}_*=0&\text{on}\ (0,\infty)\times \pa\Omega,
\\
\pa_t^2\widetilde{u}_*(0,x)=F(0)+\Delta F_*(0) &\text{in}\ \Omega. 
\end{cases}
\end{equation}
This consideration suggests that 
by estimating 
$\pa_t^2\widetilde{u}_*(t)=u(t)-S(t)u_\infty+\Delta S(t)F_*(0)$,
we could have a refined decay rate faster than $S(t)u_\infty$,
which justifies the asymptotic profile of second-order. 
Alhough a higher-order expansion also could be considered in a similar procedure 
for the case $p>\min\{1+k,1+\frac{2k}{N}\}$ ($k\in\N$), 
we do not enter the detail
because this attempt seems independent of the (non-)vanishing of the mass with respect to 
the invariant measure $\phi(x)\,dx$.
\end{remark}

Finally, we prove Corollary \ref{cor:Ngeq3}.
\begin{proof}[Proof of Corollary \ref{cor:Ngeq3}]
 If $N\geq 3$, then we can additionally find the 
explicit asymptotic behavior 
by just applying \cite[Theorem 1.1]{DR2025JDE} and using Lemma \ref{lem:unique-harmonic}:
for every $f\in L^1(\Omega)$ 
and $1\leq q\leq \infty$, 
\begin{equation}\label{eq:DR2025JDE}
\lim_{t\to \infty}\Big((1+t)^{\frac{N}{2}(1-\frac{1}{q})}\|S(t)f-M_\Omega(f)\phi(\cdot) G(t,\cdot)\|_{L^q}\Big)=0,
\end{equation}
where we recall $M_{\Omega}(f)=\int_{\Omega}f\phi\,dx$. Therefore 
combining Theorem \ref{thm;asymptotics} and \eqref{eq:DR2025JDE} with $f=u_\infty$, 
we can obtain the desired assertion.
\end{proof}


\subsection*{Declarations}
\begin{description}
\item[Data availability] Data sharing not applicable to this article as no datasets were generated or analysed during the current study.
\item[Conflict of interest] 
The authors declare that they have no conflict of interest.
\end{description}

{\small 

\end{document}